\documentclass[reqno]{amsart}

\usepackage{url,fancyvrb,amscd,verbatim}

\newcommand{\NN}{\mathbf{N}} 
\newcommand{\ZZ}{\mathbf{Z}} 
\newcommand{\QQ}{\mathbf{Q}} 
\newcommand{\RR}{\mathbf{R}} 
\newcommand{\FF}{\mathbf{F}} 
\newcommand{\PP}{\mathbf{P}} 

\providecommand{\card}[1]{\lvert#1\rvert} 
\providecommand{\HdR}{H_{\text{dR}}}   	      	
\providecommand{\Hrig}{H_{\text{rig}}} 	      	
\providecommand{\BigOh}{O}          		
\providecommand{\SoftOh}{\tilde{O}} 		

\DeclareMathOperator{\Frob}{F}          
\DeclareMathOperator{\Gal}{Gal} 	
\DeclareMathOperator{\ord}{ord} 	
\DeclareMathOperator{\fCoKer}{coker}    
\DeclareMathOperator{\fIm}{im} 		
\DeclareMathOperator{\Spec}{Spec} 	

\newtheorem{thm}{Theorem}[section]

\newtheorem{prop}[thm]{Proposition}
\newtheorem{cor}[thm]{Corollary}
\newtheorem{defn}[thm]{Definition}

\newtheorem{rem}[thm]{Remark}

\newtheorem{assump}{Assumption}

\title{Counting points on curves using a map to $\PP^1$, II.}
\author{Jan Tuitman}
\address{KU Leuven,
         Departement Wiskunde,
         Celestijnenlaan 200B,
         3001 Leuven,
         Belgium}
\email{jan.tuitman@kuleuven.be}

\begin{document}

\begin{abstract}
We introduce a new algorithm to compute the zeta function of a curve over a finite field. 
This method extends previous work of ours to all curves for which a good lift to characteristic zero 
is known. We develop all the necessary bounds, analyse the 
complexity of the algorithm and provide a complete implementation.
\end{abstract}

\maketitle

\section{Introduction}

Let $\FF_q$ denote the finite field of characteristic $p$ and cardinality $q=p^n$. Suppose that $X$ is a smooth projective algebraic curve of genus $g$ over $\FF_q$. Recall that the zeta function 
of $X$ is defined as
\[
Z(X,T)=\exp\left(\sum_{i=1}^{\infty} \card{X(\FF_{q^i})} \frac{T^i}{i} \right).
\]
It follows from the Weil conjectures that $Z(X,T)$ is of the form
\[
\frac{\chi(T)}{(1-T)(1-qT)},
\]
with $\chi(T) \in \ZZ[T]$ a polynomial of degree $2g$, the inverse roots of which have complex absolute value $q^{\frac{1}{2}}$ 
and are permuted by the map $t \rightarrow q/t$. 

Kedlaya~\cite{kedlaya} showed that $Z(X,T)$ can be determined efficiently, in the case when 
$X$ is a hyperelliptic curve and the characteristic $p$ is odd, by explicitly computing the action 
of Frobenius on the $p$-adic cohomology of~$X$. This was then extended by others to characteristic~$2$~\cite{dvhyp}, 
superelliptic curves~\cite{gaugu}, $C_{ab}$ curves~\cite{dv} and nondegenerate curves~\cite{cdv}. 
In~\cite{tuitman} we proposed a much more general and practical extension of Kedlaya's algorithm. The goal of
this paper is to further improve this algorithm.

The algorithm from \cite{tuitman} can be applied to generic, or in other words random, equations $Q$. However, there are equations to
which it cannot be applied including some very interesting examples. For example, when $Q$ is (the reduction 
at some prime number $p$ of) one of the defining equations computed for modular curves in \cite{sutherland,yang}, it turns out that the 
algorithm can almost never be applied. The reason is that in \cite{tuitman} we 
assume that $Q$, or rather its lift $\mathcal{Q}$ to characteristic zero, defines a smooth curve in the affine 
$(x,y)$-plane, i.e. that all the singularities of the plane curve defined by $\mathcal{Q}$ lie at infinity. 
In this paper we improve the algorithm from~\cite{tuitman} in (at least) two ways.

First, we eliminate the assumption that $\mathcal{Q}$ does not have any singularities in the affine $(x,y)$-plane.
As a consequence, our algorithm can now be applied to any curve for which we know a good lift to characteristic
zero in the sense of Assumption~\ref{assump:goodlift} below. In particular, for any smooth curve 
defined over the rational numbers, the algorithm can now be applied to the reduction of the curve modulo~$p$ for 
almost all prime numbers $p$. Compared with~\cite{tuitman} we have also reformulated Assumption~\ref{assump:goodlift}
and added some discussion on when it is satisfied.

Second, we give much better bounds for the $p$-adic precision required for obtaining provably correct results. In~\cite{tuitman}
we were mainly interested in obtaining the correct complexity estimate and not sharp precision bounds. In Section~\ref{sec:precision} 
we use the Newton-Girard identies and (log)-crystalline cohomology to obtain better precision bounds that are usually sharp. 

The time complexity of the algorithm is $\SoftOh(p d_x^6 d_y^4 n^3)$ by Theorem~\ref{thm:time}, 
and the space complexity $\SoftOh(p d_x^4 d_y^3 n^3)$ by Theorem~\ref{thm:space} (under one additional rather harmless
assumption which is Assumption~\ref{assump:ordW} below) as was the case in~\cite{tuitman}. Note that the time and space complexities 
of our algorithm are quasilinear in $p$ and hence not polynomial in the
size of the input which is $\log(p) d_x d_y n$. This is also the case for Kedlaya's 
algorithm and the algorithm from \cite{cdv} for example. However, for hyperelliptic curves, the dependence on $p$ of the 
time and space complexities of Kedlaya's algorithm has been improved to $\SoftOh(p^{1/2})$~\cite{harvey1} and average 
polynomial time~\cite{harvey2} by Harvey. It is an interesting open problem whether these ideas can be used to
improve the dependence on $p$ of the complexity of our algorithm as well.

Most of the theorems and propositions in this paper are very similar to corresponding ones in~\cite{tuitman}. However, there are lots of
small changes in many different places. To limit the amount of text overlap, we refer to~\cite{tuitman} 
whenever a proof is the same or very similar. We have updated our implementation in Magma \cite{magma}. The code can 
be found in the packages \verb{pcc_p{ and \verb{pcc_q{ at our webpage\footnotemark \footnotetext{\url{https://perswww.kuleuven.be/jan_tuitman}}.

The author was supported by FWO-Vlaanderen. We thank Peter Bruin, Wouter Castryck, Florian Hess and Kiran Kedlaya for helpful discussions.

\section{Lifting the curve and Frobenius}

\label{sec:lift} 

Recall that $X$ is a smooth projective algebraic curve of genus $g$ over the finite field $\FF_q$ of characteristic $p$ and cardinality $q=p^n$. Let $\QQ_p$ denote
the field of $p$-adic numbers and $\QQ_q$ its unique unramified extension of degree $n$. As usual, let 
$\sigma \in \Gal(\QQ_q/\QQ_p)$ denote the unique element that lifts the $p$-th power Frobenius map
on $\FF_q$ and let $\ZZ_q$ denote the ring of integers of $\QQ_q$, so that $\ZZ_q/p\ZZ_q \cong \FF_q$. 
Let $x:X \rightarrow \mathbf{P}^1_{\FF_q}$ be a finite separable map 
of degree $d_x$ and $y:X \rightarrow \mathbf{P}^1_{\FF_q}$ a rational function that generates 
the function field of $X$ over $\FF_q(x)$, such that $Q(x,y)=0$ where $Q \in \FF_q[x,y]$ 
is irreducible and monic in the variable $y$ of degree $d_x$. The degree of $Q$ 
in the variable $x$ (which is also the degree of the map $y$) will be denoted by $d_y$. Let $\mathcal{Q} \in \ZZ_q[x,y]$ be 
some lift of $Q$ that contains the same monomials in its support as $Q$ and is still monic in $y$. 

\begin{defn} \label{defn:Delta}
We let $\Delta(x) \in \ZZ_q[x]$ denote the discriminant of 
$\mathcal{Q}$ with respect to the variable $y$, define $r(x) \in \ZZ_q[x]$ 
as the squarefree polynomial $r=\Delta/(\gcd(\Delta,\frac{d\Delta}{dx}))$
and let $m \in \NN$ be the least positive integer such that there exist
a polynomial $g(x) \in \ZZ_q[x]$ that satisfies $r(x)^m = g(x) \Delta(x)$.
\end{defn}

We will denote $\mathcal{S}= \ZZ_q[x,1/r]$ and $\mathcal{R}= \ZZ_q[x,1/r,y]/(\mathcal{Q})$.
Moreover, we write $\mathcal{V}= \Spec \mathcal{S}$, $\mathcal{U}= \Spec \mathcal{R}$,
so that $x$ defines a finite \'etale morphism from $\mathcal{U}$ to $\mathcal{V}$. 
We let $U=\mathcal{U} \otimes_{\ZZ_q} \FF_q$, $V=\mathcal{V} \otimes_{\ZZ_q} \FF_q$ 
denote the special fibres and $\mathbb{U}=\mathcal{U} \otimes_{\ZZ_q} \QQ_q$, 
$\mathbb{V}=\mathcal{V} \otimes_{\ZZ_q} \QQ_q$ the generic fibres of
$\mathcal{U}$ and $\mathcal{V}$. Finally, $\FF_q(x,y)$ will denote
the field of fractions of $\mathcal{R} \otimes \FF_q$ and $\QQ_q(x,y)$ the field of fractions
of $\mathcal{R} \otimes \QQ_q$.

\begin{assump} \label{assump:goodlift} We will assume that:
\begin{enumerate}
\item Matrices $W^0 \in Gl_{d_x}(\ZZ_q[x,1/r])$ and
$W^{\infty} \in Gl_{d_x}(\ZZ_q[x,1/x,1/r])$ are given such that, if we denote 
$b^0_j = \sum_{i=0}^{d_x-1} W^0_{i+1,j+1} y^i$ and 
$b^{\infty}_j = \sum_{i=0}^{d_x-1} W^{\infty}_{i+1,j+1} y^i$ for all $0 \leq j \leq d_x-1$, then:
\begin{enumerate}
\item $[b^{0 \;}_0,\ldots,b^{0 \;}_{d_x-1}]$ is an integral basis for $\QQ_q(x,y)$ over $\QQ_q[x]$ and its reduction modulo~$p$ is an integral basis for $\FF_q(x,y)$ over $\FF_q[x]$,
\item $[b^{\infty}_0,\ldots,b^{\infty}_{d_x-1}]$ is an integral basis for $\QQ_q(x,y)$ over $\QQ_q[1/x]$ and its reduction modulo~$p$ is an integral basis for $\FF_q(x,y)$ over $\FF_q[1/x]$.
\end{enumerate}
\noindent Let $W \in Gl_{d_x}(\ZZ_q[x,1/x])$ be the change of basis matrix defined by $W = (W^0)^{-1} W^{\infty}$ and denote
\begin{align*}
\mathcal{R}^0        &= \ZZ_q[x]b^{0}_0 \; \; \; \; \; +\ldots+\ZZ_q[x]b^{0}_{d_x-1}, \\
\mathcal{R}^{\infty} &= \ZZ_q[1/x]b^{\infty}_0+\ldots+\ZZ_q[1/x]b^{\infty}_{d_x-1}.
\end{align*}
Note that these are rings (even $\ZZ_q[x]$ and $\ZZ_q[1/x]$-algebras, respectively).
\item The discriminant of $r(x)$ is a unit.
\item The discriminants of the finite $\ZZ_q$-algebras $\mathcal{R}^0/(r(x))$ and $\mathcal{R}^{\infty}/(1/x)$ are units.
\end{enumerate}
\end{assump}

\begin{rem}
Note that the extra assumption from~\cite{tuitman} (that we are eliminating here) was that $W^0$
is the identity matrix.
\end{rem}

Geometrically, Assumption~\ref{assump:goodlift} says that the finite \'etale morphism $x: \mathcal{U} \rightarrow \mathcal{V}$
admits a good compactification. More precisely:

\begin{prop} \label{prop:goodlift} \mbox{ }
\begin{enumerate} 
\item There exists a smooth relative divisor $\mathcal{D}_{\mathbf{P}^1}$ on $\mathbf{P}^1_{\ZZ_q}$ 
such that $\mathcal{V} = \mathbf{P}^1_{\ZZ_q} \setminus \mathcal{D}_{\mathbf{P}^1}$.  
\item There exists a smooth proper curve $\mathcal{X}$ over $\ZZ_q$ and a smooth relative 
divisor $\mathcal{D}_{\mathcal{X}}$ on $\mathcal{X}$ such that $\mathcal{U} = \mathcal{X} \setminus \mathcal{D}_{\mathcal{X}}$. 
\end{enumerate}
\end{prop}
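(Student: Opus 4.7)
For part~(1), the plan is to take $\mathcal{D}_{\mathbf{P}^1}$ to be the closed subscheme of $\mathbf{P}^1_{\ZZ_q}$ obtained as the disjoint union of $V(r) \subset \Spec \ZZ_q[x]$ and the section at infinity $\{\infty\}$. Each piece is \'etale over $\Spec \ZZ_q$: for $\{\infty\}$ this is trivial, and for $V(r) = \Spec \ZZ_q[x]/(r)$ it is precisely Assumption~\ref{assump:goodlift}(2), which forces $r$ to be separable in both characteristic zero and characteristic $p$. The two pieces are disjoint because $V(r)$ lies entirely in the affine chart $\Spec \ZZ_q[x]$, so their union is a smooth relative divisor, cut out locally by $r$ on the first standard chart of $\mathbf{P}^1_{\ZZ_q}$ and by $1/x$ on the second. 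The identification $\mathcal{V} = \mathbf{P}^1_{\ZZ_q} \setminus \mathcal{D}_{\mathbf{P}^1}$ is then immediate from $\mathcal{V} = \Spec \ZZ_q[x, 1/r]$.

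For part~(2), I would build $\mathcal{X}$ by gluing the two affine charts $\mathcal{X}^0 := \Spec \mathcal{R}^0$ and $\mathcal{X}^{\infty} := \Spec \mathcal{R}^{\infty}$. The invertibility of $W = (W^0)^{-1} W^{\infty}$ over $\ZZ_q[x, 1/x]$ in Assumption~\ref{assump:goodlift}(1) identifies $\mathcal{R}^0[1/x]$ with $\mathcal{R}^{\infty}[x]$ as $\ZZ_q[x, 1/x]$-subalgebras of $\QQ_q(x,y)$, which gives the canonical gluing over the overlap of the two standard charts of $\mathbf{P}^1_{\ZZ_q}$. Each chart is finite and free of rank $d_x$ over its base $\mathbf{A}^1$, so $\mathcal{X}$ is finite over $\mathbf{P}^1_{\ZZ_q}$, hence proper over $\ZZ_q$. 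As a candidate divisor I would take $\mathcal{D}_{\mathcal{X}} = \Spec \mathcal{R}^0/(r) \sqcup \Spec \mathcal{R}^{\infty}/(1/x)$: the two parts are disjoint since they lie over the disjoint pieces of $\mathcal{D}_{\mathbf{P}^1}$, and each is \'etale over $\Spec \ZZ_q$ directly by Assumption~\ref{assump:goodlift}(3). Finally, $\mathcal{U} = \mathcal{X} \setminus \mathcal{D}_{\mathcal{X}}$ reduces to the identity $\mathcal{R}^0[1/r] = \mathcal{R}$, which holds because $W^0 \in Gl_{d_x}(\ZZ_q[x, 1/r])$.

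The main step is the smoothness of $\mathcal{X}$, which I reduce to the smoothness of each of $\mathcal{X}^0, \mathcal{X}^{\infty}$ over $\Spec \ZZ_q$. Since $\mathcal{R}^0$ is free of rank $d_x$ over $\ZZ_q[x]$ the morphism $\mathcal{X}^0 \to \Spec \ZZ_q$ is flat, so it suffices to verify that both fibres are regular curves. The generic fibre $\mathcal{R}^0 \otimes_{\ZZ_q} \QQ_q$ is the integral closure of $\QQ_q[x]$ in $\QQ_q(x,y)$ by Assumption~\ref{assump:goodlift}(1)(a), hence a Dedekind domain. For the special fibre I would invoke the mod-$p$ half of the same hypothesis: it asserts that $\mathcal{R}^0 \otimes_{\ZZ_q} \FF_q$ is the integral closure of $\FF_q[x]$ in the function field $\FF_q(x,y)$ of the smooth curve $X$, again a Dedekind domain, and hence regular. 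Flat plus regular fibres of constant relative dimension one yields smoothness, and the symmetric argument for $\mathcal{X}^{\infty}$ uses Assumption~\ref{assump:goodlift}(1)(b) in place of (1)(a). The hard part is this special-fibre check: without the reduction-mod-$p$ clause, $\mathcal{R}^0 \otimes_{\ZZ_q} \FF_q$ need not be the full integral closure in $\FF_q(x,y)$ and could be singular, giving only a normal but not necessarily smooth integral model over $\ZZ_q$.
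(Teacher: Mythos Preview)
Your proof is correct and follows essentially the same approach as the paper: glue $\Spec \mathcal{R}^0$ and $\Spec \mathcal{R}^{\infty}$ to form $\mathcal{X}$, deduce smoothness from flatness together with regularity of both fibres (via Assumption~\ref{assump:goodlift}(1)), and use parts~(2) and~(3) of the assumption to see that $\mathcal{D}_{\mathbf{P}^1}$ and $\mathcal{D}_{\mathcal{X}}$ are \'etale over $\ZZ_q$. Your write-up is in fact more careful than the paper's, gluing over the preimage of $\Spec \ZZ_q[x,1/x]$ rather than ``along $\mathcal{U}$'' and spelling out why each fibre is a Dedekind domain.
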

\begin{proof}
We can glue $\Spec{\mathcal{R}^0}$ and $\Spec{\mathcal{R}^{\infty}}$ together along $\mathcal{U}$ to obtain a curve $\mathcal{X}$
over $\ZZ_q$. Note that $\mathcal{R}^0$ and $\mathcal{R}^{\infty}$ are clearly flat over $\ZZ_q$, so smoothness follows from 
regularity of the special and generic fibres, which is a consequence of the first part of Assumption~\ref{assump:goodlift}. 
The complement $\mathcal{D}_{\PP^1}$ of $\mathcal{V}$ in $\PP^1_{\ZZ_q}$ is the union of the zero locus of $r(x)$ and the point~$\infty$ and
is \'etale (hence smooth) over $\ZZ_q$ by the second part of Assumption~\ref{assump:goodlift}. Finally, the complement of 
$\mathcal{U}$ in $\mathcal{X}$ is the union of the zero locus of $r(x)$ and $x^{-1}(\infty)$ and is \'etale (hence smooth) over $\ZZ_q$ by 
the third part of Assumption~\ref{assump:goodlift}.
\end{proof}

We write $\mathbb{X}=\mathcal{X} \otimes \QQ_q$ for the generic fibre of $\mathcal{X}$. Note that $\mathcal{X} \otimes \FF_q \cong X$
by construction. Moreover, $z_P$ will denote an \'etale local coordinate and $e_P$ the ramification index of the map $x$ at a point
$P \in \mathcal{X} \setminus \mathcal{U}$.

Note that in~\cite{tuitman}, Proposition~\ref{prop:goodlift} was itself the main assumption and not a consequence of it. However, there
we still needed to assume that $W^{0}$ and $W^{\infty}$ were known (actually we restricted to the case where $W^0$ could be taken to be the 
identity matrix). Stating Assumption~\ref{assump:goodlift} as above and deriving Proposition~\ref{prop:goodlift} as a consequence is
simpler and shows more clearly how to check explicitly that a lift of $X$ given by $\mathcal{Q}$ and the matrices $W^0$, $W^{\infty}$
is suitable for the algorithm. Since Assumption~\ref{assump:goodlift} is the only remaining (but essential) assumption for our algorithm to work, 
let us analyse it in some more detail now.

It is natural to ask when a lift $\mathcal{Q}$ and matrices $W^0$, $W^{\infty}$ satisfying Assumption~\ref{assump:goodlift} exist for a given $Q$. 
From the theory of the tame fundamental group~\cite[Expos\'e XIII, Section~$2$]{sga1}, it should follow that this is the case when the map $x:X \rightarrow \PP^1_{\FF_q}$ is 
tamely ramified. Since any curve of characteristic $p > 2$ is a tame cover of the projective line~\cite[Theorem $8.1$]{fulton} (at least after 
extending the base field), by varying $Q$ our method should apply to any curve in characteristic $p > 2$. However, in our algorithm we need to 
know all of these polynomials and matrices explicitly, knowing that they exist is of little use. 

We would like to have an algorithm that given $Q$ finds a lift $\mathcal{Q}$ and matrices $W^0$, $W^{\infty}$ satisfying
Assumption~\ref{assump:goodlift} when they exist. However, even for the simpler problem of finding a smooth lift 
$\mathcal{X}$ of a curve $X$ (to some finite $p$-adic precision $N$) we have not found an effective solution in the literature except
in some special cases like complete intersections in projective space or nondegenerate curves for which it is trivial. Therefore, the problem
of finding a lift $\mathcal{Q}$ and matrices $W^0$, $W^{\infty}$ satisfying Assumption~\ref{assump:goodlift} is probably hard
in general. Note that other point counting algorithms using $p$-adic cohomology also need a good lift to characteristic~$0$,
but almost always restrict to nondegenerate curves or hypersurfaces, for which it is easy to find one. The only exception to this that 
we know of is~\cite{dvhyp}, where indeed quite a lot of effort goes into finding a good lift to characteristic~$0$ for hyperelliptic 
curves in characteristic~$2$.

Although it is probably hard to find a lift $\mathcal{Q}$ and matrices $W^0$, $W^{\infty}$ satisfying Assumption~\ref{assump:goodlift}
in general, the following strategy is often succesful. Let $K$ be a number field of degree~$n$ in which $p$ is inert and 
let $\mathcal{O}_{K}$ denote its ring of integers. Then we can identify the residue field $\mathcal{O}_{K}/p\mathcal{O}_{K}$ with 
$\FF_q$ and the $p$-adic completion of $\mathcal{O}_{K}$ with $\ZZ_q$. We first try to find a lift $\mathcal{Q} \in \mathcal{O}_K[x,y]$ 
that defines a function field of genus equal to the genus of $X$. Over a number field efficient algorithms to compute integral bases in 
function fields are available~\cite{hess,bauch}. We can simply run such an algorithm, hope that the matrices $W^0$,$W^{\infty}$ and their inverses are $p$-adically 
integral and that the second and third condition of Assumption~\ref{assump:goodlift} are also satisfied. 
Together with W. Castryck we have recently shown that (in odd characteristic) this strategy works for (almost) all curves of genus at most~$5$ 
and most trigonal and tetragonal curves, even if we impose that the degree $d_x$ of the morphism $x$ is as small as possible, i.e. equals the 
gonality of the curve~\cite{castrycktuitman}. 

Note that if we start from $\mathcal{Q} \in \ZZ[x,y]$ and compute $W^0$, $W^{\infty}$ over $\QQ$, then Assumption~\ref{assump:goodlift} will
be satisfied for all but a finite number of primes~$p$ (by generic smoothness). Therefore, for any curve over $\QQ$ our algorithm applies modulo all but a finite number
of primes~$p$ and a similar statement holds over number fields. So our algorithm can in principle be applied to computing $L$-series of general
curves, although this will not be very efficient since the time complexity per prime~$p$ is quasilinear in $p$.

To summarise the discussion above: existence of a lift $\mathcal{Q}$ and matrices $W^0$, $W^{\infty}$ is usually not a problem (in odd characteristic),
but it is not clear how to find them explicitly in general. In some (quite general) special cases we can almost always find a suitable lift, for example 
for curves of genus at most~$5$ and most nondegenerate, trigonal or tetragonal curves~\cite{castrycktuitman}. Finally, the lifting problem can also be circumvented by starting 
from a curve that is already defined over a number field, which is still very interesting from the point of view of computing zeta functions. \\

We now move on to the first part of the algorithm, which is lifting the Frobenius map.

\begin{prop} \label{prop:s}
Let $\mathcal{A}$ denote the ring $\ZZ_q[x,y]/(\mathcal{Q})$. Then the quotient
\[
s(x,y)=\Delta(x)/\frac{\partial \mathcal{Q}}{\partial y}
\]
exists in $\mathcal{A}$.
\end{prop}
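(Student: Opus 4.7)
My plan is to use the standard Bezout identity for the resultant. Since $\mathcal{Q}$ is monic in $y$ of degree $d_x$, one has (up to a sign $\pm 1$ depending on $d_x$)
\[
\Delta(x)\;=\;\mathrm{Res}_y\!\left(\mathcal{Q},\tfrac{\partial \mathcal{Q}}{\partial y}\right).
\]
A classical fact about resultants, proved via the Sylvester matrix and Cramer's rule, is that for any two polynomials $f,g\in R[y]$ (here $R=\ZZ_q[x]$) there exist $A,B\in R[y]$ with
\[
A\cdot f \;+\; B\cdot g \;=\; \mathrm{Res}_y(f,g).
\]
Applying this with $f=\mathcal{Q}$ and $g=\partial\mathcal{Q}/\partial y$, we obtain polynomials $A,B\in\ZZ_q[x,y]$ satisfying
\[
A(x,y)\,\mathcal{Q}(x,y) \;+\; B(x,y)\,\frac{\partial \mathcal{Q}}{\partial y}(x,y) \;=\;\pm\Delta(x).
\]
Reducing this identity modulo $\mathcal{Q}$ shows that $\pm B$ serves as the desired quotient $s(x,y)$ in $\mathcal{A}=\ZZ_q[x,y]/(\mathcal{Q})$.

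The only things to check beyond citing the identity are that (i) the resultant here really coincides with the discriminant $\Delta$ and (ii) the Bezout coefficients can be chosen with coefficients in $\ZZ_q[x]$ (not just in the fraction field). Both are straightforward because $\mathcal{Q}$ is monic in $y$: monicity makes the Sylvester determinant equal to the discriminant without dividing by a leading coefficient, and the same monicity means Gaussian elimination on the Sylvester matrix, or equivalently the Euclidean-algorithm construction of the Bezout coefficients, can be carried out entirely over $\ZZ_q[x]$ without inverting anything.

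I do not expect any real obstacle here; the statement is essentially the assertion that $\partial\mathcal{Q}/\partial y$ divides $\Delta$ in $\mathcal{A}$, which is exactly the content of the Bezout form of the resultant identity. The one subtle point — and the main thing to get right — is to justify that the coefficients $A,B$ lie in $\ZZ_q[x,y]$ rather than $\QQ_q[x,y]$, and this is where monicity of $\mathcal{Q}$ in $y$ is used.
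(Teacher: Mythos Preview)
Your argument is correct and is essentially the same as the paper's: the paper sets up the Sylvester matrix $\Sigma$ of the map $(a,b)\mapsto a\mathcal{Q}+b\,\partial\mathcal{Q}/\partial y$, observes $\Delta=\det(\Sigma)$ (up to a harmless sign), and uses Cramer's rule/adjugate to conclude $\Delta$ lies in the image, exactly your Bezout identity $A\mathcal{Q}+B\,\partial\mathcal{Q}/\partial y=\pm\Delta$. The only quibble is that your aside about running the Euclidean algorithm over $\ZZ_q[x]$ is not quite right as stated (that ring is not Euclidean in $y$), but the Cramer's rule argument you give first already produces $A,B\in\ZZ_q[x,y]$ and is what the paper uses.
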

\begin{proof}
For $k \in \NN$, we let $W_k$ denote the free $\ZZ_q[x]$-module of polynomials in $\ZZ_q[x,y]$ of degree
at most $k-1$ in the variable $y$. Let $\Sigma$ be the matrix of the $\ZZ_q[x]$-module homomorphism:
\begin{align} \label{eq:alphabeta}
W_{d_x-1} \oplus W_{d_x} &\rightarrow W_{2d_x-1}, &(a,b) \mapsto a \mathcal{Q} + b \frac{\partial \mathcal{Q}}{\partial y},
\end{align}
with respect to the bases $[1,y,\dotsc,y^{d_x-2}]$, $[1,y,\dotsc, y^{d_x-1}]$ and $[1,y,\dotsc,y^{2d_x-2}]$. 
By definition we have $\Delta = \det(\Sigma)$, so that $\Delta$ is contained in the image of~\eqref{eq:alphabeta} and
$\Delta(x)/\frac{\partial \mathcal{Q}}{\partial y}$ exists in $\mathcal{A}$. 
\end{proof}

\begin{defn} We denote the ring of overconvergent functions on $\mathcal{U}$ by
\[
\mathcal{R}^{\dag} = \ZZ_q \langle x, 1/r, y \rangle^{\dag}/(\mathcal{Q}). 
\]
Note that $\mathcal{R}^{\dag}$ is a free module of rank $d_x$ over $\mathcal{S}^{\dag}=\ZZ_q \langle x, 1/r \rangle^{\dag}$ 
and that a basis is given by $[y^0, \dotsc, y^{d_x-1}]$. A Frobenius lift 
$\Frob_p:\mathcal{R}^{\dag} \rightarrow \mathcal{R}^{\dag}$ is defined as a $\sigma$-semilinear 
ring homomorphism that reduces modulo $p$ to the $p$-th power Frobenius map. 
 \end{defn}
 
\begin{thm} \label{thm:froblift} There exists a Frobenius lift $\Frob_p: \mathcal{R}^{\dag} \rightarrow \mathcal{R}^{\dag}$ for which
$\Frob_p(x)=x^p$. 
\end{thm}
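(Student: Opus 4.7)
The plan is to construct $\Frob_p$ in two stages: first on the base $\mathcal{S}^{\dag}$, and then extend across the finite \'etale cover to $\mathcal{R}^{\dag}$ by Hensel lifting on the defining equation $\mathcal{Q}$.

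For the first stage I would take $\Frob_p$ to be the $\sigma$-semilinear ring endomorphism of $\ZZ_q\langle x \rangle^{\dag}$ sending $x$ to $x^p$. To extend it to $\mathcal{S}^{\dag} = \ZZ_q\langle x, 1/r\rangle^{\dag}$, the one thing to check is that $\Frob_p(r) = r^{\sigma}(x^p)$ is invertible overconvergently. Since $\sigma$ reduces to the $p$-th power map modulo $p$, we have $r^{\sigma}(x^p) \equiv r(x)^p \pmod p$, so writing $r^{\sigma}(x^p) = r(x)^p + p\rho$ with $\rho \in \ZZ_q[x]$, I would set
\[
\Frob_p(1/r) \;=\; \frac{1}{r^p} \sum_{k=0}^{\infty} (-1)^k \bigl(p\rho/r^p\bigr)^k.
\]
In the $k$-th term the pole order in $r$ is $p(k+1)$ while the $p$-adic valuation is at least $k$, so the series defines an element of $\mathcal{S}^{\dag}$ and $\Frob_p$ extends to $\mathcal{S}^{\dag}$ as a ring homomorphism reducing to the $p$-th power map modulo $p$.

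For the second stage I need to find $Y \in \mathcal{R}^{\dag}$ with $Y \equiv y^p \pmod p$ and $\mathcal{Q}^{\sigma}(x^p, Y) = 0$, where $\mathcal{Q}^{\sigma}$ denotes $\mathcal{Q}$ with $\sigma$ applied to its coefficients; then setting $\Frob_p(y) = Y$ completes the definition. Starting from $Y_0 = y^p$ one has $\mathcal{Q}^{\sigma}(x^p, y^p) \equiv \mathcal{Q}(x,y)^p \equiv 0 \pmod p$, and the derivative $(\partial \mathcal{Q}^{\sigma}/\partial Y)(x^p, y^p) \equiv (\partial \mathcal{Q}/\partial y)^p \pmod p$ is a unit in $\mathcal{R}^{\dag}$: indeed Proposition~\ref{prop:s} gives $s = \Delta/(\partial \mathcal{Q}/\partial y) \in \mathcal{A}$, and Definition~\ref{defn:Delta} yields $1/\Delta = g/r^m \in \mathcal{S}^{\dag}$, so $(\partial \mathcal{Q}/\partial y)^{-1} = sg/r^m$ is overconvergent. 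Newton iteration
\[
Y_{k+1} \;=\; Y_k - \mathcal{Q}^{\sigma}(x^p, Y_k) \cdot \bigl((\partial \mathcal{Q}^{\sigma}/\partial Y)(x^p, Y_k)\bigr)^{-1}
\]
then produces a $p$-adically Cauchy sequence in $\mathcal{R}^{\dag}$ converging quadratically to a root $Y$.

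The hard part will be to verify that this Newton limit lies in $\mathcal{R}^{\dag}$ rather than merely in the $p$-adic completion. Each correction $Y_{k+1} - Y_k$ is divisible by $p^{2^k}$, yet introduces only a bounded number of additional powers of $1/r$ (coming from the factor $(\partial \mathcal{Q}/\partial y)^{-1} = sg/r^m$); hence the pole order in $r$ of $Y_k$ grows only linearly in $k$, which means in the $p$-adic expansion of the limit the pole order of the coefficient of $p^j$ grows only logarithmically in $j$, well within the overconvergence bound. Alternatively, one can invoke the Monsky--Washnitzer principle that a Frobenius lift on a weakly complete base extends uniquely to any finite \'etale weakly complete extension, applied to $\mathcal{R}^{\dag}$ over $\mathcal{S}^{\dag}$, whose \'etaleness is exactly the invertibility of $\partial \mathcal{Q}/\partial y$ established above.
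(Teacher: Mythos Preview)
Your approach is essentially the same as the paper's: construct $\Frob_p(1/r)$ by inverting $r^{\sigma}(x^p)$ and $\Frob_p(y)$ by Hensel/Newton lifting the root $y^p$ of $\mathcal{Q}^{\sigma}(x^p,\cdot)$ modulo~$p$, and then check overconvergence. The paper writes both iterations explicitly and simultaneously: Newton inversion for $\alpha_i \to \Frob_p(1/r)$ starting from $1/r^p$, and
\[
\beta_{i+1} \;=\; \beta_i \;-\; \mathcal{Q}^{\sigma}(x^p,\beta_i)\, s^{\sigma}(x^p,\beta_i)\, g^{\sigma}(x^p)\, \alpha_i^{\,m} \pmod{p^{2^{i+1}}},
\]
i.e.\ it hard-codes the approximate inverse of the derivative as $s^{\sigma}(x^p,\beta_i)\,g^{\sigma}(x^p)\,\alpha_i^{\,m}$ rather than inverting $(\partial\mathcal{Q}^{\sigma}/\partial Y)(x^p,Y_k)$ afresh at each step. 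This is exactly your identity $(\partial\mathcal{Q}/\partial y)^{-1}=sg/r^m$ transported by $\sigma$ and $x\mapsto x^p$, with $\alpha_i$ standing in for $\Frob_p(1/r)$.

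One small caution: in your exact-inverse formulation the assertion that each Newton correction ``introduces only a bounded number of additional powers of $1/r$'' is not immediate, since the honest inverse $\bigl((\partial\mathcal{Q}^{\sigma}/\partial Y)(x^p,Y_k)\bigr)^{-1}$ in $\mathcal{R}^{\dag}$ is itself an infinite series in $1/r$ whose pole order is not uniformly bounded in $k$. The paper's approximate-inverse iteration sidesteps this, because each $\beta_{i+1}$ is a \emph{polynomial} expression in $\beta_i$, $\alpha_i$, $x$ (reduced mod $p^{2^{i+1}}$), so the $r$-adic pole order at precision $p^N$ is controlled directly by Proposition~\ref{prop:convbound}. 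Your Monsky--Washnitzer alternative is of course a valid way to close this gap; the paper prefers the explicit recursion because it is what the algorithm actually computes.
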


\begin{proof}
Let notation be as in Definition~\ref{defn:Delta} and Proposition~\ref{prop:s}.
Define sequences $(\alpha_i)_{i \geq 0}$, $(\beta_i)_{i \geq 0}$, 
with $\alpha_i \in S^{\dag}$ and $\beta_i \in \mathcal{R}^{\dag}$, 
by the following recursion:
\begin{align*}
&\alpha_0=\frac{1}{r^{p}}, \\
&\beta_0=y^p, \\
&\alpha_{i+1}=\alpha_i(2-\alpha_i r^{\sigma}(x^p)) &\pmod{ \; p^{2^{i+1}}}, \\ 
&\beta_{i+1}= \beta_i - \mathcal{Q}^{\sigma}(x^p,\beta_i) s^{\sigma}(x^p, \beta_i) g^{\sigma}(x^p) \alpha_i^m &\pmod{ \; p^{2^{i+1}}}.
\end{align*}
Then one easily checks that the $\sigma$-semilinear ringhomomorphism $\Frob_p: \mathcal{R}^{\dag} \rightarrow \mathcal{R}^{\dag}$ defined by
\begin{align*}
\Frob_p\bigl(x\bigr)&=x^p, &\Frob_p\left(1/r\right)&=\lim_{i \rightarrow \infty} \alpha_i,
&\Frob_p\bigl(y\bigr)&=\lim_{i \rightarrow \infty} \beta_i,
\end{align*}
is a Frobenius lift.
\end{proof}

\begin{rem}
Comparing to~\cite{tuitman}, in the definition of the $\beta_{i}$ we have had to replace $1/r(x)$ by $1/\Delta(x) = g(x)/r(x)^m$. Note
that the $\alpha_{i}$ have not changed and still converge to $\Frob_p(1/r(x))$.
\end{rem}

\begin{prop} \label{prop:con}
Let $G^0 \in M_{d_x \times d_x}(\ZZ_q[x,1/r])$ and $G^{\infty} \in M_{d_x \times d_x}(\ZZ_q[x,1/x,1/r])$ denote the matrices such that
\begin{align*}
db^0_j &= \sum_{i=0}^{d_x-1} G^0_{i+1,j+1} b^0_{i} dx,
&db^{\infty}_j &= \sum_{i=0}^{d_x-1} G^{\infty}_{i+1,j+1} b^{\infty}_{i} dx,
\end{align*}
for all $0 \leq j \leq d_x-1$. Let $x_0 \neq \infty$ be a geometric point of $\PP^1(\bar{\QQ}_q)$. Then the matrix $G^0 dx$ has at most a simple pole at $x_0$. 
Similarly, the matrix $G^{\infty} dx$ has at most a simple pole at $x=\infty$.
\end{prop}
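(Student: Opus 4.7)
The plan is to argue locally at each geometric point of $\PP^1(\bar{\QQ}_q)$, using the étaleness input provided by Assumption~\ref{assump:goodlift}(3). Away from the roots of $r(x)$ the entries of $G^0$ lie in $\ZZ_q[x,1/r]$ and are therefore regular, so there is nothing to prove; all the content of the claim concerns the behavior at a root $x_0$ of $r$, and the analogous statement for $G^\infty dx$ at $x=\infty$.

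The key structural observation is that Assumption~\ref{assump:goodlift}(3) forces $x:\mathcal{X} \to \PP^1$ to be unramified above every root of $r$. Indeed, combining the étaleness of $\mathcal{R}^0/(r(x))$ over $\ZZ_q$ with that of $\ZZ_q[x]/(r(x))$ over $\ZZ_q$ (the latter coming from Assumption~\ref{assump:goodlift}(2)), one sees that $\mathcal{R}^0/(r(x))$ is étale over $\ZZ_q[x]/(r(x))$. Geometrically, this means that above each root $x_0 \in \bar{\QQ}_q$ of $r$ lie exactly $d_x$ distinct geometric points $P_1,\ldots,P_{d_x}$ of $\mathcal{X}$, each with ramification index one; moreover, the $d_x \times d_x$ evaluation matrix $M := (b^0_i(P_l))_{l,i}$ is invertible over $\bar{\QQ}_q$, since it realizes the change of basis between the integral basis $\{b^0_i\}$ and the idempotent basis of the étale $\bar{\QQ}_q$-algebra $\mathcal{R}^0 \otimes_{\ZZ_q[x]} \bar{\QQ}_q \cong \prod_l \bar{\QQ}_q$.

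With this in hand I would close the argument by evaluation at the $P_l$. Because $e_{P_l}=1$, the function $x-x_0$ is a uniformizer at each $P_l$, and each $b^0_j$ is integral there, so $db^0_j/dx$ is itself regular at $P_l$. Suppose for contradiction that for some fixed $j$ the largest pole order of $G^0_{i+1,j+1}$ at $x_0$ (over~$i$) is some integer $\mu \geq 1$. The values $c_i := \bigl[(x-x_0)^\mu G^0_{i+1,j+1}\bigr]_{x=x_0} \in \bar{\QQ}_q$ are then all well defined and not identically zero. Multiplying the defining identity $db^0_j/dx = \sum_i G^0_{i+1,j+1} b^0_i$ by $(x-x_0)^\mu$ and evaluating at $P_l$ makes the left hand side vanish ($(x-x_0)^\mu$ has order $\mu \geq 1$ at $P_l$ and $db^0_j/dx$ is regular there), yielding $\sum_i c_i\, b^0_i(P_l) = 0$ for every $l$. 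Invertibility of $M$ then forces $c_i=0$ for all $i$, a contradiction. Hence $\mu \leq 0$ and $G^0$ is in fact regular at $x_0$, which certainly implies the stated ``at most a simple pole'' bound.

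The assertion for $G^\infty dx$ at $\infty$ is handled by the same argument after swapping to the local coordinate $t=1/x$: Assumption~\ref{assump:goodlift}(3) applied to $\mathcal{R}^\infty/(1/x)$ gives $d_x$ unramified geometric points above $\infty$ and an invertible evaluation matrix $(b^\infty_i(P_l))$, and the change of variables $dx = -dt/t^2$ translates a sufficiently high-order vanishing of $G^\infty$ at $t=0$ into the desired pole bound for $G^\infty dx$. The main obstacle throughout is converting the étaleness input of Assumption~\ref{assump:goodlift}(3) into the concrete linear-algebra fact that the evaluation matrix is invertible; once this is established, the rest is a direct order-of-vanishing calculation.
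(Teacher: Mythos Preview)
Your argument is correct but takes a different route from the paper's. The proof referenced from \cite[Proposition~2.8]{tuitman} uses only that $[b^0_0,\ldots,b^0_{d_x-1}]$ is an integral basis: for any point $P$ of $\mathbb{X}$ above $x_0$, with \emph{arbitrary} ramification index $e_P$, the logarithmic form $dx/(x-x_0)$ has exactly a simple pole at $P$, so $(x-x_0)\,db^0_j/dx$ is integral at every such $P$ and therefore has coordinates in $\mathcal{O}_{\PP^1,x_0}$ with respect to the integral basis---which is exactly the statement that $(x-x_0)G^0$ is regular at $x_0$. You instead combine parts~(2) and~(3) of Assumption~\ref{assump:goodlift} to deduce that $\mathcal{R}^0/(r)$ is \'etale over $\ZZ_q[x]/(r)$, hence that $x$ is unramified above every root of $r$, and then evaluate at the $d_x$ distinct preimages; this is valid and even yields the stronger conclusion that $G^0$ itself is regular at $x_0$. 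The trade-off is that the paper's argument is insensitive to ramification---it carries over unchanged to the setting of \cite{tuitman} and is what makes the later discussion of nonzero exponents in Proposition~\ref{prop:exps} and of the quantities $e_0,e_\infty$ meaningful---whereas yours extracts a sharper bound from the specific hypotheses stated here. Your sketch at $\infty$ is terse but correct: in the coordinate $t=1/x$ the same evaluation argument shows that the connection matrix with respect to $dt$ is regular at $t=0$, again stronger than the simple-pole claim.
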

\begin{proof}
For $G^{\infty} dx$ the proof is given in~\cite[Proposition $2.8$]{tuitman}. For $G^0 dx$ the argument is the same, 
replacing the integral basis $b^{\infty}$ by $b^0$ and the local parameter $t$ by $(x-x_0)$.
\end{proof}

In particular, we have that $rG^0 \in M_{d_x \times d_x}(\ZZ_q[x])$. 

\begin{defn}
Let $x_0 \in \PP^1(\bar{\QQ}_q) \setminus \infty$ be a geometric point. The exponents of $G^0 dx$ at $x_0$ are defined as the
eigenvalues of the residue matrix $G^{x_0}_{-1}=(x-x_0)G^0|_{x=x_0}$. Moreover, the exponents of $G^{\infty} dx$ at $x=\infty$ are defined as 
its exponents at $t=0$, after substituting $x=1/t$.
\end{defn}

\begin{prop} \label{prop:exps} The exponents of $G^0 dx$ at any geometric point $x_0 \in \PP^1(\bar{\QQ}_q) \setminus \infty$ 
and the exponents of $G^{\infty} dx$ at $x = \infty$ are elements of $\QQ \cap \ZZ_p$ and are contained in the interval $[0,1)$. 
\end{prop}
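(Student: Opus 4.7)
The plan is to reduce to a local analysis at each geometric point $x_0 \in \bar{\QQ}_q$ where $r(x_0) = 0$ (elsewhere $G^0\,dx$ is regular and the exponent is automatically $0$), and to identify the residue via explicit local uniformisers at the points of $\mathbb{X}$ above $x_0$. Assumption~\ref{assump:goodlift}(2) guarantees that every such $x_0$ lies in $\bar{\ZZ}_p$ and that we may factor $r(x) = (x - x_0) v(x)$ with $v(x_0) = r'(x_0) \in \bar{\ZZ}_p^{\times}$.

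By Proposition~\ref{prop:goodlift} the curve $\mathbb{X}$ is smooth, so the completion of $\mathcal{R}^0 \otimes \bar{\QQ}_q$ at $(x - x_0)$ decomposes as $\prod_{P} \bar{\QQ}_q[[z_P]]$, where $P$ runs over the geometric points of $\mathbb{X}$ above $x_0$, $z_P$ is a uniformiser at $P$ and $x - x_0 = z_P^{e_P} u_P$ with $u_P \in \bar{\QQ}_q[[z_P]]^{\times}$. The family $\{z_P^j : 0 \leq j < e_P\}$, taken over all $P$, is a free $\bar{\QQ}_q[[x-x_0]]$-basis of the completion of rank $\sum_P e_P = d_x$. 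Differentiating the relation $x - x_0 = z_P^{e_P} u_P$ yields
\[
\frac{d(z_P^j)}{dx} \; = \; \frac{j\,u_P}{e_P u_P + z_P\,\partial u_P/\partial z_P} \cdot \frac{z_P^j}{x - x_0},
\]
and a routine calculation — reducing $z_P^{e_P + m}$ via $z_P^{e_P} = (x-x_0)u_P^{-1}$ — shows that the connection matrix in this local basis is block diagonal (one block per $P$), each block being upper triangular with simple poles and diagonal entries $0, 1/e_P, 2/e_P, \ldots, (e_P-1)/e_P$. The transition matrix $M$ from $b^0$ to this local basis lies in $GL_{d_x}(\bar{\QQ}_q[[x-x_0]])$, both sides being free $\bar{\QQ}_q[[x-x_0]]$-bases of the same completion (using that $b^0$ is an integral basis, Assumption~\ref{assump:goodlift}(1)(a)); the gauge transformation $G^0 = M^{-1} M' + M^{-1} G_{\mathrm{loc}} M$ then leaves the residue conjugate to the local residue, since $(x - x_0) M^{-1} M'$ vanishes at $x_0$. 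Hence the exponents are precisely the rationals $j/e_P$ with $0 \leq j < e_P$, which lie in $\QQ \cap [0,1)$.

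For $p$-adic integrality, Proposition~\ref{prop:con} combined with $G^0 \in M_{d_x \times d_x}(\ZZ_q[x, 1/r])$ and $v(x_0) \in \bar{\ZZ}_p^{\times}$ implies that the simple pole of $G^0\,dx$ at $x_0$ has residue in $M_{d_x \times d_x}(\bar{\ZZ}_p)$; its eigenvalues are therefore integral over $\ZZ_p$, and combined with rationality they lie in $\QQ \cap \bar{\ZZ}_p = \ZZ_{(p)} \subset \ZZ_p$, which \emph{a posteriori} forces the tame ramification $p \nmid e_P$. The argument at $\infty$ is parallel after substituting $t = 1/x$ and using $b^{\infty}$ in place of $b^0$. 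The main technical hurdle will be justifying regularity and invertibility of $M$ at $x_0$ — equivalently, that $b^0$ continues to be a basis of the completed local ring even at ramified fibres of $x$ — which depends on the integrality of $b^0$ both in characteristic zero and after reduction modulo $p$.
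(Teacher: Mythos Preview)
Your argument is correct and is precisely the local computation that the paper defers to \cite[Proposition~2.10]{tuitman}: pass to the completion at $x_0$, diagonalise the connection in the basis $\{z_P^j\}$ to read off the exponents $j/e_P$, and use that an integral basis yields a gauge change in $GL_{d_x}(\bar{\QQ}_q[[x-x_0]])$ so that the residue is preserved up to conjugacy. One small remark: invertibility of $M$ over $\bar{\QQ}_q[[x-x_0]]$ only uses the characteristic-zero half of Assumption~\ref{assump:goodlift}(1)(a); the mod~$p$ statement is not needed there (integrality of the residue comes instead, as you note, from $rG^0 \in M_{d_x \times d_x}(\ZZ_q[x])$ together with $r'(x_0)\in\bar{\ZZ}_p^{\times}$).
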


\begin{proof} 
The proof is the same as that of~\cite[Proposition 2.10]{tuitman} replacing the integral basis $[1,y,\ldots,y^{d_x-1}]$ by 
$[b^0_0,\ldots,b^0_{d_x-1}]$.
\end{proof}

\begin{defn} For a geometric point $x_0 \in \mathbf{P}^1(\bar{\QQ}_q)$, we let $\ord_{x_0}(\cdot)$ denote the discrete valuation on
$\bar{\QQ}_q(x)$ corresponding to $x_0$. Moreover, we define 
\[
\ord_{\neq \infty}(\cdot) = \min_{x_0 \in \PP^1(\bar{\QQ}_q) \setminus \infty} \{\ord_{x_0}(\cdot)\}.
\]
We extend these definitions to matrices over $\bar{\QQ}_q(x)$ by taking the minimum over their entries.
\end{defn}

\begin{prop} \label{prop:convbound}
Let $N \in \NN$ be a positive integer.
\begin{enumerate}
\item The element $\Frob_p(1/r)$ of $\mathcal{S}^{\dag}$ is congruent modulo~$p^N$ to 
\[\sum_{i=p}^{pN} \frac{\rho_i(x)}{r^i},
\]
 where  
$\rho_i \in \ZZ_q[x]$ satisfies $\deg(\rho_i)<\deg(r)$ for all $p \leq i \leq pN$.
\item For all $0 \leq i \leq d_x-1$, the element $\Frob_p(y^i)$ of $\mathcal{R}^{\dag}$ is congruent modulo~$p^N$ to 
$\sum_{j=0}^{d-1} \phi_{i,j}(x) y^j$, where
\[
\phi_{i,j} = \sum_{k=0}^{p(N-1)-\ord_{\neq \infty}(W^0)-p\ord_{\neq \infty}((W^0)^{-1})} \frac{\phi_{i,j,k}(x)}{r^k}
\]
for all $0 \leq j \leq d_x-1$ and $\phi_{i,j,k} \in \ZZ_q[x]$ satisfies
\begin{align*}
\deg(\phi_{i,j,0})& \leq -\ord_{\infty}(W^{\infty})-p\ord_{\infty}((W^{\infty})^{-1}), \\
\deg(\phi_{i,j,k})& <\deg(r),
\end{align*}
for all $0 \leq j \leq d_x-1$ and $1 \leq k \leq p(N-1)-\ord_{\neq \infty}(W^0)-p\ord_{\neq \infty}((W^0)^{-1})$.
\item  For all $0 \leq i \leq d_x-1$, the element $\Frob_p(b^0_i/r)$ of $\mathcal{R}^{\dag}$ is congruent modulo~$p^N$ 
to  $\sum_{j=0}^{d_x-1} \psi_{i,j}(x) (b^0_j/r)$, where
\[
\psi_{i,j} = \sum_{k=0}^{pN-1} \frac{\psi_{i,j,k}(x)}{r^k}
\]
for all $0 \leq j \leq d_x-1$ and $\psi_{i,j,k} \in \ZZ_q[x]$ satisfies
\begin{align*}
\deg(\psi_{i,j,0})&\leq -\ord_{\infty}(W)-p\ord_{\infty}(W^{-1})-(p-1)\deg(r), \\
\deg(\psi_{i,j,k})&<\deg(r),
\end{align*}
for all $0 \leq j \leq d_x-1$ and $1 \leq k \leq pN-1$.
\end{enumerate}
\end{prop}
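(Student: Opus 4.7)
The plan is to analyse the Newton iterations from the proof of Theorem~\ref{thm:froblift} and to track both (i) the powers of $r$ appearing in denominators, which control poles at finite points, and (ii) the pole orders at $\infty$, using the integral bases $b^0$ and $b^\infty$ to separate these two types of behaviour.

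For part~(1), the key observation is that $\Frob_p(1/r)$ is the inverse of $r^\sigma(x^p)$ in $\mathcal{S}^\dag$. Writing $r^\sigma(x^p) = r(x)^p + p h(x)$ with $h \in \ZZ_q[x]$, the geometric-series expansion
\[
\Frob_p(1/r) = \frac{1}{r^p}\sum_{k \geq 0}\left(-\frac{ph}{r^p}\right)^k
\]
is congruent modulo $p^N$ to its truncation at $k \leq N-1$, producing terms with denominators $r^{pk}$ for $1 \leq k \leq N$. A partial fraction decomposition with respect to the squarefree polynomial $r$ then gives the required form with $\deg(\rho_i) < \deg(r)$ and $p \leq i \leq pN$; no polynomial part appears because every term in the expansion already starts at denominator $r^p$.

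For part~(2), I would iterate the recursion for $\beta_i$ and reduce modulo $\mathcal{Q}$ at each step. The finite-pole part of the bound comes from analysing the sequences in the integral basis $b^0$: an element integral at all finite points of $\mathcal{V}$ maps under $\Frob_p$ to an integral element up to a bounded power of $1/r$, giving at most $p(N-1)$ extra powers. Converting between $y^i$ and $b^0_i$ at the start and end costs at most $\ord_{\neq\infty}(W^0)$ and $p\ord_{\neq\infty}((W^0)^{-1})$ additional powers of $r$ in the denominators, yielding the full bound $K$ in the statement. The degree bound on $\phi_{i,j,0}$ at $\infty$ is proved by the analogous argument performed with $b^\infty$: applied to an element integral at $\infty$, $\Frob_p$ produces at worst a pole of order $p\ord_\infty((W^\infty)^{-1})$ there, and converting back to the basis $y^j$ costs another $-\ord_\infty(W^\infty)$.

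Part~(3) follows the same scheme but in the basis $[b^0_j/r]$. At finite points the normalisation $1/r$ absorbs what would otherwise be a $W^0$ correction, giving the simpler denominator bound $pN-1$. At $\infty$ the single change-of-basis matrix $W = (W^0)^{-1} W^\infty$ relates the two integral bases directly, so that $\ord_\infty(W)$ and $\ord_\infty(W^{-1})$ replace the corresponding quantities for $W^0$ and $W^\infty$ separately; the extra $(p-1)\deg(r)$ reflects the net $p$-fold growth at $\infty$ of the pole of $1/r$ under $\Frob_p$, minus the single $\deg(r)$ already built into the normalisation. The main obstacle is the sharp bookkeeping required in parts~(2) and~(3): the proof in~\cite{tuitman} handled only the case $W^0 = I$ and so avoided one of the two change-of-basis steps, and care is needed to ensure that the finite and infinite contributions decouple cleanly and combine into the stated quantities without double-counting.
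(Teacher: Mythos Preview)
Your proposal is essentially correct and matches the approach the paper has in mind: the paper's own proof is simply ``very similar to that of \cite[Proposition~2.12]{tuitman}'', and you have correctly identified that the adaptation consists of inserting the change-of-basis matrices $W^0$, $W^{\infty}$, $W$ at the appropriate places in the original argument, using the integral bases $b^0$ and $b^{\infty}$ to separately control poles at finite points and at infinity.
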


\begin{proof}
The proof is very similar to that of~\cite[Proposition 2.12]{tuitman}.
\end{proof}

\section{Computing (in) the cohomology}

\label{sec:coho}

\begin{defn} The rigid cohomology of $U$ in degree $1$ can be defined as
\[
\Hrig^1(U) = \fCoKer(d: \mathcal{R}^{\dag} \to \Omega^1(\mathbb{U}) \otimes \mathcal{R}^{\dag}). 
\]
\end{defn}

\begin{thm} \label{thm:comparison}
\begin{align*}
\Hrig^1(U) \cong \HdR^1(\mathbb{U})
\end{align*}
\end{thm}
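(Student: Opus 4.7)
The goal is to compare two cokernels of $d$: the cokernel on the overconvergent complex defining $\Hrig^1(U)$, and the algebraic cokernel defining $\HdR^1(\mathbb{U}) = \fCoKer(d: \mathcal{R} \otimes \QQ_q \to \Omega^1(\mathbb{U}))$. The inclusion of complexes $(\mathcal{R} \otimes \QQ_q) \hookrightarrow (\mathcal{R}^{\dag} \otimes \QQ_q)$ is compatible with $d$ and induces a natural map $\HdR^1(\mathbb{U}) \to \Hrig^1(U)$; the plan is to show this map is an isomorphism.

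The cleanest approach is to invoke the standard comparison theorem between algebraic de Rham cohomology and Monsky--Washnitzer (equivalently rigid) cohomology for smooth affine varieties admitting a good compactification, and to verify that the hypotheses of that theorem hold in our setting. By Proposition~\ref{prop:goodlift}, Assumption~\ref{assump:goodlift} produces a smooth proper model $\mathcal{X}/\ZZ_q$ of $\mathbb{U}$ together with an \'etale (hence smooth) relative divisor $\mathcal{D}_{\mathcal{X}}$ such that $\mathcal{U} = \mathcal{X} \setminus \mathcal{D}_{\mathcal{X}}$, and this is exactly the good compactification datum required for the general comparison theorem to apply. Alternatively, one can argue by hand, as is standard in Monsky--Washnitzer theory: given an overconvergent 1-form $\omega$, expand it in an \'etale local parameter $z_P$ at each boundary point $P \in \mathcal{D}_{\mathcal{X}}$, observe that the principal (negative-index) part of the expansion is a finite algebraic contribution, and then successively eliminate the holomorphic (positive-index) part by subtracting $d$ of a suitable overconvergent primitive assembled from local antiderivatives $z_P^{k+1}/(k+1)$.

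The main obstacle in a direct proof is the global gluing step: turning local antiderivatives at each $P \in \mathcal{D}_{\mathcal{X}}$ into a single element of $\mathcal{R}^{\dag} \otimes \QQ_q$ that cancels the holomorphic parts at every boundary point simultaneously, while preserving overconvergence. Here the last two parts of Assumption~\ref{assump:goodlift} are essential: since $\mathcal{D}_{\mathcal{X}}$ is \'etale over $\ZZ_q$, its residue disks in the analytification of $\mathcal{X}$ are disjoint ordinary open disks, so the reductions at different boundary points do not interact and overconvergence is preserved by the gluing. Since the analogous theorem was proved in~\cite{tuitman} under a more restrictive form of the assumption, and the local behaviour at the boundary is unchanged by the present generalisation (which only replaces the identity matrix by a general $W^0$), I would expect the author to simply refer to that proof, noting that no step depends on $W^0$ being the identity.
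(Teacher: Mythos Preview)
Your proposal is correct, and its primary route---invoking the general comparison theorem between rigid and algebraic de Rham cohomology for a smooth affine with a good compactification, with the hypotheses supplied by Proposition~\ref{prop:goodlift}---is exactly what the paper does (citing Baldassarri--Chiarellotto for the comparison theorem). The paper does not refer back to \cite{tuitman} for this particular statement, and your alternative hands-on reduction sketch is not needed, though the paper does remark later that Propositions~\ref{prop:finitered}--\ref{prop:infiniteprecision} give an effective alternative proof.
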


\begin{proof}
This follows as a special case from the comparison theorem between rigid and de Rham
cohomology of Baldassarri and Chiarellotto \cite{baldachiar}, since by Proposition~\ref{prop:goodlift}
$\mathcal{D}_{\mathcal{X}}$ is smooth over $\ZZ_q$.
\end{proof}

We can effectively reduce any $1$-form to one of low pole order using linear algebra as in~\cite{tuitman}. 
The procedure consists of two parts, the finite reductions at the points not lying over $x=\infty$ and 
the infinite reductions at the points lying over $x=\infty$, respectively. We start with the finite reductions.

\begin{prop} \label{prop:finitered}
For all $\ell \in \NN$ and every vector $w \in \QQ_q[x]^{\oplus d_x}$, 
there exist vectors $u,v \in \QQ_q[x]^{\oplus d_x}$
with $\deg(v) < \deg(r)$, such that
\begin{align*}
\frac{\sum_{i=0}^{d_x-1} w_i b^0_i}{r^{\ell}} \frac{dx}{r} &= d\left(\frac{\sum_{i=0}^{d_x-1} v_i b^0_i}{r^{\ell}} \right)+\frac{\sum_{i=0}^{d_x-1}u_i b^0_i}{r^{\ell-1}} \frac{dx}{r}.
\end{align*}
\end{prop}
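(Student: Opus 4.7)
The plan is to expand the exact differential on the right, compare coefficients in the basis $[b^0_0,\dotsc,b^0_{d_x-1}]$, and so reduce the desired identity to a linear congruence modulo $r$. Viewing $v,u,w$ as column vectors indexed $0,\dotsc,d_x-1$, the product rule together with $db^0_j=\sum_i G^0_{i+1,j+1} b^0_i\,dx$ from Proposition~\ref{prop:con} gives
\[
d\!\left(\frac{\sum_i v_i b^0_i}{r^\ell}\right) \;=\; \frac{\sum_i (v'+G^0 v)_i\, b^0_i}{r^\ell}\,dx \;-\; \ell\,\frac{r'\sum_i v_i b^0_i}{r^{\ell+1}}\,dx,
\]
where $(G^0 v)_i := \sum_j G^0_{i+1,j+1}\, v_j$. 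Multiplying the desired identity by $r^{\ell+1}/dx$ and comparing coefficients in $b^0$ turns it into the polynomial identity $w = r v' + (rG^0)v - \ell r' v + r u$; note $rG^0$ is a polynomial matrix by the remark after Proposition~\ref{prop:con}. Setting $u := (w - rv' - (rG^0)v + \ell r' v)/r$ makes this identity hold, and $u$ lies in $\QQ_q[x]^{\oplus d_x}$ exactly when
\[
(rG^0 - \ell r' I)\,v \;\equiv\; w \pmod r.
\]

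So the task reduces to finding $v \in \QQ_q[x]^{\oplus d_x}$ with $\deg v < \deg r$ solving the congruence above. The map $v \mapsto (rG^0 - \ell r' I)v$ on $(\QQ_q[x]/(r))^{\oplus d_x}$ is an endomorphism of a $\QQ_q$-space of dimension $d_x \deg r$, so existence (and uniqueness) will follow once I show $rG^0 - \ell r' I$ is invertible modulo $r$. Since $r$ is squarefree by Assumption~\ref{assump:goodlift}(2), I would base-change to $\bar\QQ_q$ and use the Chinese Remainder Theorem to check invertibility at each geometric root $x_0$ of $r$ separately: writing $r = (x-x_0)\tilde r$ with $\tilde r(x_0) = r'(x_0) \neq 0$, the fact that $rG^0$ is polynomial with residue matrix $G^{x_0}_{-1}$ at $x_0$ yields
\[
(rG^0 - \ell r' I)\big|_{x=x_0} \;=\; r'(x_0)\,\bigl(G^{x_0}_{-1} - \ell\, I\bigr),
\]
so the remaining question is whether $\ell$ can be an eigenvalue of the residue matrix $G^{x_0}_{-1}$.

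The main (and essentially only) step is ruling this out, and this is precisely where Proposition~\ref{prop:exps} enters: the exponents of $G^0 dx$ at $x_0$, that is the eigenvalues of $G^{x_0}_{-1}$, lie in $[0,1)\cap\QQ\cap\ZZ_p$, so no positive integer $\ell$ is among them and $G^{x_0}_{-1} - \ell I$ is invertible. Hence $rG^0 - \ell r' I$ is invertible modulo $r$, the required $v$ (and therefore $u$) exists, and the proof is the same as the finite reduction of \cite{tuitman} with the basis $[1,y,\dotsc,y^{d_x-1}]$ replaced by $[b^0_0,\dotsc,b^0_{d_x-1}]$.
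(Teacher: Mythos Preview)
Your proof is correct and follows essentially the same route as the paper, which simply refers to \cite[Proposition~3.3]{tuitman} with the basis $[1,y,\dotsc,y^{d_x-1}]$ replaced by $[b^0_0,\dotsc,b^0_{d_x-1}]$; the computation you carry out---reducing to the congruence $(rG^0-\ell r'I)v\equiv w\pmod r$ and invoking Proposition~\ref{prop:exps} to see that $\ell\ge 1$ is never an exponent---is exactly that argument. Your evaluation $(rG^0-\ell r'I)\big|_{x=x_0}=r'(x_0)(G^{x_0}_{-1}-\ell I)$ and the base change to $\bar\QQ_q$ to apply CRT over the squarefree $r$ are also handled correctly.
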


\begin{proof}
The proof is the same as that of~\cite[Proposition $3.3$]{tuitman} replacing the integral basis $[1,y,\ldots,y^{d_x-1}]$ by 
$[b^0_0,\ldots,b^0_{d_x-1}]$.
\end{proof}
We now move on to the infinite reductions.

\begin{prop} \label{prop:infinitered}
For every vector $w \in \QQ_q[x,1/x]^{\oplus d_x}$ with 
\[
\ord_{\infty}(w) \leq - \deg(r),
\] 
there exist vectors $u,v \in \QQ_q[x,1/x]^{\oplus d_x}$ with $\ord_{\infty}(u) > \ord_{\infty}(w)$ such that
\begin{align*}
\left(\sum_{i=0}^{d_x-1} w_i b^{\infty}_i\right) \frac{dx}{r} =  d\left(\sum_{i=0}^{d_x-1}v_i b^{\infty}_i\right)+\left(\sum_{i=0}^{d_x-1}u_i b^{\infty}_i \right) \frac{dx}{r}.
\end{align*}
\end{prop}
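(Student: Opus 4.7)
The plan is to mirror the proof of Proposition~\ref{prop:finitered}, but now cancelling the leading term of $w$ at $x=\infty$ through a careful choice of $v$. Using the connection matrix $G^{\infty}$, one has
\[
d\Bigl(\sum_{i=0}^{d_x-1} v_i\, b^{\infty}_i\Bigr) = \sum_{i=0}^{d_x-1}\Bigl(G^{\infty}v + \tfrac{dv}{dx}\Bigr)_{\!i}\, b^{\infty}_i\, dx,
\]
so the desired identity forces $u = w - r\,G^{\infty} v - r\,\tfrac{dv}{dx}$. The task reduces to choosing $v$ so that $\ord_{\infty}(u) > \ord_{\infty}(w)$.

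Set $d = \deg(r)$ and $s = -\ord_{\infty}(w) \geq d$, and write $w = c\,x^{s} + (\text{terms of strictly lower } x\text{-degree})$ with $c \in \QQ_q^{d_x}$ nonzero. I would try $v = v'\,x^{a}$, where $v' \in \QQ_q^{d_x}$ is a constant vector to be determined and $a = s - d + 1$, so $a \geq 1$ is a positive integer. Expanding $G^{\infty}(x) = -G^{\infty}_{\infty}/x + O(1/x^{2})$ at infinity, where $G^{\infty}_{\infty}$ is the residue matrix of $G^{\infty}\,dx$ at $x=\infty$, together with $r(x) = r_{d}\,x^{d} + \ldots$, one computes
\[
r\,G^{\infty}v + r\,\tfrac{dv}{dx} = r_{d}\,(aI - G^{\infty}_{\infty})\,v'\,x^{s} + (\text{lower order in }x).
\]
Setting $v' = r_{d}^{-1}(aI - G^{\infty}_{\infty})^{-1} c$ therefore kills the $x^{s}$-coefficient of $u$, yielding $\ord_{\infty}(u) > \ord_{\infty}(w)$. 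The only non-routine step is the invertibility of $aI - G^{\infty}_{\infty}$, and this is exactly what Proposition~\ref{prop:exps} delivers: the eigenvalues of $G^{\infty}_{\infty}$ (by definition the exponents of $G^{\infty}\,dx$ at infinity) all lie in $[0,1)$, while $a$ is a positive integer, so $a$ cannot be an eigenvalue.

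It remains to verify that $u$ actually lies in $\QQ_q[x, 1/x]^{\oplus d_x}$, which is not immediate since a priori $G^{\infty}$ has entries in $\ZZ_q[x, 1/x, 1/r]$. The key observation is that $r\,G^{\infty}$ in fact lies in $M_{d_x\times d_x}(\ZZ_q[x, 1/x])$: from the change-of-basis relation $W\,G^{\infty} = G^{0}\,W + \tfrac{dW}{dx}$ one obtains
\[
r\,G^{\infty} = W^{-1}(r\,G^{0})\,W + r\,W^{-1}\,\tfrac{dW}{dx},
\]
and both summands are in $M_{d_x\times d_x}(\ZZ_q[x, 1/x])$ because $r\,G^{0} \in M_{d_x\times d_x}(\ZZ_q[x])$ (as noted right after Proposition~\ref{prop:con}) while $W, W^{-1} \in Gl_{d_x}(\ZZ_q[x, 1/x])$. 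Since $v = v'\,x^{a}$ is a polynomial in $x$, this gives $u \in \QQ_q[x, 1/x]^{\oplus d_x}$ as required.
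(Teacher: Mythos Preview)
Your argument is correct and is exactly the standard reduction-at-infinity argument that the paper is citing from \cite[Proposition~3.4]{tuitman}: one takes $v$ to be a monomial $v'x^{a}$ with $a=-\ord_{\infty}(w)-\deg(r)+1$, and the invertibility of $aI-G^{\infty}_{\infty}$ follows from Proposition~\ref{prop:exps} since $a\geq 1$ while the exponents lie in $[0,1)$. Your verification that $rG^{\infty}\in M_{d_x\times d_x}(\ZZ_q[x,1/x])$ via the change-of-basis identity is also the right way to ensure $u$ lands in $\QQ_q[x,1/x]^{\oplus d_x}$; note too that your choice $v=v'x^{a}$ makes Remark~\ref{rem:ord0W} transparent, since $\ord_0(v)=a$.
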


\begin{proof} 
The proof is given in~\cite[Proposition $3.4$]{tuitman}
\end{proof}

\begin{rem} \label{rem:ord0W}
Note that when $\ord_{\infty}(w) \leq \ord_0(W)-\deg(r)+1$, we have that $\ord_0(v) \geq -\ord_0(W)$, so that the 
function $\sum_{i=0}^{d_x-1}v_i b^{\infty}_i$ only has poles at points lying over $x=\infty$.
\end{rem}

Next we give an explicit description of the cohomology space $\Hrig^1(U)$.

\begin{thm} \label{thm:cohobasis}
Define the following $\QQ_q$-vector spaces:
\begin{align*}
E_0 &= \Bigg\{ \left( \sum_{i=0}^{d_x-1} u_i(x) b^0_i \right) \frac{dx}{r} &\colon& u \in \QQ_q[x]^{\oplus d_x} \Bigg \}, \\
E_{\infty}&= \Bigg \{ \left( \sum_{i=0}^{d_x-1} u_i(x,1/x) b_{i}^{\infty} \right) \frac{dx}{r} &\colon& u \in \QQ_q[x,1/x]^{\oplus d_x}, \ord_{\infty}(u) > \ord_0(W)-\deg(r)+1 \Bigg \}, \\
B_0 &= \bigg \{ \sum_{i=0}^{d_x-1} v_i(x) b^0_i &\colon& v \in  \QQ_q[x]^{\oplus d_x} \bigg \}, \\ 
B_{\infty}&=\bigg \{ \sum_{i=0}^{d_x-1} v_i(x,1/x) b^{\infty}_i &\colon& v \in \QQ_q[x,1/x]^{\oplus d_x}, \ord_{\infty}(v) > \ord_0(W) \bigg \}.
\end{align*}
Then $E_0 \cap E_{\infty}$ and $d(B_0 \cap B_{\infty})$ are finite dimensional $\QQ_q$-vector spaces and 
\begin{align*}
\Hrig^1(U) &\cong (E_0 \cap E_{\infty})/d(B_0 \cap B_{\infty}).
\end{align*}
\end{thm}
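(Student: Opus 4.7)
The plan is to identify $\Hrig^1(U)$ with $(E_0 \cap E_\infty)/d(B_0 \cap B_\infty)$ by combining the two reduction procedures (Propositions~\ref{prop:finitered} and~\ref{prop:infinitered}) with order-of-vanishing analyses at the boundary points of $\mathcal{X}$. The overall structure will closely parallel the proof of the analogous result in~\cite{tuitman}, with the integral basis $[1,y,\ldots,y^{d_x-1}]$ replaced by $[b^0_0,\ldots,b^0_{d_x-1}]$ at finite points and $[b^\infty_0,\ldots,b^\infty_{d_x-1}]$ at infinity.

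First I will check finite-dimensionality and the inclusion $d(B_0 \cap B_\infty) \subseteq E_0 \cap E_\infty$. For $B_0 \cap B_\infty$, an element $\sum v_i b^0_i$ with $v \in \QQ_q[x]^{\oplus d_x}$ lying in $B_\infty$ satisfies $\ord_\infty(W^{-1} v) > \ord_0(W)$, so $v = W(W^{-1}v)$ forces $\deg(v) < -\ord_0(W) - \ord_\infty(W)$; an analogous bound, with $\ord_0(W) - \deg(r) + 1$ replacing $\ord_0(W)$, yields finite-dimensionality of $E_0 \cap E_\infty$. Next I will compute $df$ for $f = \sum v_i b^0_i \in B_0$: using $db^0_i = (\sum_j G^0_{j+1,i+1} b^0_j)\, dx$, we obtain $df = \sum_j (rv_j' + \sum_i v_i (rG^0)_{j+1,i+1})\, b^0_j \cdot \frac{dx}{r}$, which lies in $E_0$ since $rG^0 \in M_{d_x \times d_x}(\ZZ_q[x])$ by the remark after Proposition~\ref{prop:con}. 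The dual computation in the $b^\infty$ basis, using $\ord_\infty(G^\infty) \geq 1$ extracted from the simple-pole statement of Proposition~\ref{prop:con}, shows $df \in E_\infty$ for $f \in B_\infty$.

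For surjectivity, I would take any class in $\Hrig^1(U) \cong \HdR^1(\mathbb{U})$ and represent it by an algebraic $1$-form $\omega = \frac{1}{r^{\ell-1}}(\sum c_i(x) b^0_i)\cdot\frac{dx}{r}$ on $\mathbb{U}$. Iterating Proposition~\ref{prop:finitered} lowers $\ell$ down to $1$ and places the class in $E_0$ modulo exact forms whose antiderivatives are polynomial in the $b^0$ basis and hence in $B_0$. Rewriting the result in the $b^\infty$ basis as $(\sum \tilde u_j b^\infty_j)\frac{dx}{r}$ with $\tilde u = W^{-1} u$, I would then iterate Proposition~\ref{prop:infinitered} to push $\ord_\infty(\tilde u)$ above $\ord_0(W) - \deg(r) + 1$. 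At every step of this second iteration the hypothesis of Remark~\ref{rem:ord0W} is active, so the antiderivative $\sum v_j b^\infty_j$ satisfies $\ord_0(v) \geq -\ord_0(W)$ and thus lies in $\mathcal{R}^0 \otimes \QQ_q = B_0$; an inspection of the construction of Proposition~\ref{prop:infinitered} in~\cite{tuitman} shows that $v$ moreover satisfies $\ord_\infty(v) > \ord_0(W)$, so the accumulated exact form lies in $d(B_0 \cap B_\infty)$.

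The hard part will be injectivity: given $\omega \in E_0 \cap E_\infty$ with $\omega = df$ in $\Hrig^1(U)$, I must show $f$ can be chosen in $B_0 \cap B_\infty$. By Theorem~\ref{thm:comparison} I may take $f \in \mathcal{R} \otimes \QQ_q$. At a point $P$ above a zero of $r$, the local expansion $f = \sum_k c_k z_P^k$ and the identity $df = \sum_k k c_k z_P^{k-1} dz_P$, combined with $\ord_P(\omega) \geq -1$ (which follows from $\omega \in E_0$), force $c_k = 0$ for every $k \leq -1$; hence $f$ is regular at $P$ and therefore $f \in \mathcal{R}^0 \otimes \QQ_q = B_0$. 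At a point above infinity, writing $f = \sum \tilde v_j b^\infty_j$ and using $\tilde u = r(\tilde v' + G^\infty \tilde v)$, a leading-order expansion in $t = 1/x$ shows that if $\tilde v$ has leading term $\bar v\, t^k$, then $\tilde u$ has leading order $k + 1 - \deg(r)$ unless $\bar v$ lies in the $k$-eigenspace of the matrix $G_1$ obtained from the residue matrix of $G^\infty dx$ at $t = 0$. Proposition~\ref{prop:exps} restricts the eigenvalues of $G_1$ to $(-1, 0]$, so the only integer eigenvalue is~$0$; for $k \neq 0$ no cancellation is possible, and $\ord_\infty(\tilde v) = \ord_\infty(\tilde u) + \deg(r) - 1 > \ord_0(W)$ places $f$ in $B_\infty$, while the resonant case $k = 0$ is handled by adjusting $f$ by a $\QQ_q$-constant.
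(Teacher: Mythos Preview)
Your proposal is correct and follows essentially the same approach as the paper, which simply defers to \cite[Theorem~3.6]{tuitman} with the change of basis matrix $W^{\infty}$ replaced by $W$; you have spelled out precisely this adaptation, using Propositions~\ref{prop:finitered} and~\ref{prop:infinitered} for surjectivity and a local pole-order analysis for injectivity. One small sharpening: in your resonant case $k=0$ for injectivity, note that $\ord_\infty(\tilde v)\geq 0$ together with $f\in B_0$ already forces $f$ to be a global regular function on $\mathbb{X}$, hence a constant, so $\omega=df=0$ and no adjustment is needed.
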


\begin{proof}
The proof is the same as that of~\cite[Theorem 3.6]{tuitman} replacing the change of basis matrix $W^{\infty}$ by $W$.
\end{proof}

Note that by the proof of Theorem~\ref{thm:cohobasis}, we can effectively reduce any $1$-form to one
in $E_0 \cap E_{\infty}$ with the same cohomology class. However, the reduction procedure will introduce 
$p$-adic denominators and therefore suffer from loss of $p$-adic precision. In the following two propositions 
we bound these denominators.

\begin{prop} \label{prop:finiteprecision}
Let $\omega \in \Omega^1(\mathcal{U})$ be of the form
\[
\omega=\frac{\sum_{i=0}^{d_x-1} w_i b^0_i}{r^{\ell}} \frac{dx}{r},
\]
where $\ell \in \NN$ and $w \in \ZZ_q[x]^{\oplus d_x}$ satisfies $\deg(w)<\deg(r)$.
We define 
\[
e_0 = \max \{ e_P | P \in \mathcal{X} \setminus \mathcal{U}, x(P) \neq \infty \}.
\]
If we represent the class of $\omega$ in $\Hrig^1(U)$ by  
\[
\left(\sum_{i=0}^{d_x-1} u_i b^0_i \right) \frac{dx}{r},
\]
with $u \in \QQ_q[x]^{\oplus d_x}$ as in the proof of Theorem~\ref{thm:cohobasis}, then
\[
p^{\lfloor \log_p(\ell e_0) \rfloor} u \in \ZZ_q[x]^{\oplus d_x}.
\]
\end{prop}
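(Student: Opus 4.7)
The plan is to iterate Proposition~\ref{prop:finitered} a total of $\ell$ times, lowering the pole order in $r$ from $\ell$ down to $0$, while tracking the $p$-adic loss of precision at each step.

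First I would decouple the analysis across the geometric roots of $r$ using the Chinese Remainder Theorem: the polynomial $v$ produced at each iteration is determined by its values at the points where $r$ vanishes, so the precision loss can be controlled locally at each root $x_0$. At such a root, the linear system to be solved at step $k$ reads $\tilde{r}(x_0)\,(k I - G^{x_0}_{-1})\,v(x_0) = -w^{(k)}(x_0)$, where $\tilde{r}(x)=r(x)/(x-x_0)$ and $w^{(k)}$ denotes the numerator after $\ell-k$ reductions.

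The per-step bound comes from Proposition~\ref{prop:exps}: the eigenvalues $a_j$ of $G^{x_0}_{-1}$ lie in $\QQ\cap\ZZ_p\cap[0,1)$, and by the tame ramification structure of $\mathcal{X}\to\mathbf{P}^1_{\ZZ_q}$ (a consequence of Proposition~\ref{prop:goodlift}) they take the form $a_j=s_j/e$ with $e$ dividing some ramification index $e_P\mid e_0$ and $p\nmid e$. Consequently
\[
v_p(k - a_j)\;=\;v_p(ek - s_j)\;\leq\;\lfloor\log_p(\ell e_0)\rfloor
\]
for every $1 \leq k \leq \ell$, since $|ek-s_j| < e_0(\ell+1)$. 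So each single inversion encountered in the reduction costs at most $p^{\lfloor\log_p(\ell e_0)\rfloor}$ in precision.

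The main obstacle is to assemble these per-step bounds into a bound on the total denominator of $u$ of the form $p^{\lfloor\log_p(\ell e_0)\rfloor}$, rather than the much weaker bound of order $\ell$ that would follow from naively compounding the inversion denominators. I would argue by induction on the explicit recursion
\[
w^{(k-1)} = \frac{w^{(k)} + k\,v^{(k)}\,r' - r\bigl((v^{(k)})' + G^0 v^{(k)}\bigr)}{r},
\]
working in eigenbases of the residue matrices $G^{x_0}_{-1}$ after passage to a splitting field. The subtle point is to exhibit a cancellation inside this formula: the combination $k\,v^{(k)}\,r' - r\,G^0\,v^{(k)}$ agrees with $-w^{(k)}$ to leading order at each root of $r$ (this is exactly the defining condition for $v^{(k)}$), so after division by $r$ the inversion factor $(k-a_j)^{-1}$ does not propagate multiplicatively into $w^{(k-1)}$; it is effectively replaced by the inversion factor at the next step rather than accumulated. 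Only the worst single inversion encountered over all $\ell$ steps survives in $u$, and combined with the per-step bound above this gives $p^{\lfloor\log_p(\ell e_0)\rfloor}\,u \in \ZZ_q[x]^{\oplus d_x}$.
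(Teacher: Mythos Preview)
Your per-step analysis is fine: the eigenvalues of $G^{x_0}_{-1}$ do have the form $s_j/e$ with $e\mid e_0$ and $p\nmid e$, so each matrix $(kI-G^{x_0}_{-1})^{-1}$ has $p$-adic valuation bounded below by $-\lfloor\log_p(\ell e_0)\rfloor$. The gap is in your accumulation argument. The ``cancellation'' you invoke is nothing more than the congruence
\[
w^{(k)} + k\,r'\,v^{(k)} - (rG^0)\,v^{(k)} \equiv 0 \pmod r,
\]
which is precisely the condition allowing you to divide by $r$; it carries no $p$-adic content. After division by $r$ the quotient, and hence $w^{(k-1)}$, still has $p$-adic valuation only $\ge v_p(v^{(k)})$ in general (already in the scalar case $d_x=1$, $r=x(x-1)$ one checks that $w^{(\ell-1)}$ contains a term $(v^{(\ell)})'$ with denominator $\ell$). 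Passing to an eigenbasis of the residue matrix diagonalises only the leading singular term of $G^0$; the regular part of the connection mixes the eigenlines, so the recursion does not decouple and your replacement mechanism does not materialise. As written, your induction yields only a bound of order $\sum_{k\le\ell} v_p(k-a_j)$, not $\max_{k\le\ell} v_p(k-a_j)$.

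The paper's proof avoids this difficulty by not tracking the recursion on the base at all. One works upstairs, at each point $P\in\mathcal{X}\setminus\mathcal{U}$ with $x(P)\neq\infty$, using the \'etale local coordinate $z_P$. There $\omega$ has an integral Laurent expansion with pole order at most $\ell e_P+1$, and the function $F=\sum_k(\sum_i v^{(k)}_i b^0_i)/r^k$ produced by the full reduction has its principal part at $P$ forced by the single equation $dF\equiv\omega$ modulo forms with at most a simple pole. So the negative Laurent coefficients of $F$ at $P$ are obtained from those of $\omega$ by dividing once by an integer in $\{1,\dots,\ell e_P\}$, giving $v_p\ge -\lfloor\log_p(\ell e_0)\rfloor$ directly. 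Because $[b^0_0,\dots,b^0_{d_x-1}]$ is an integral basis over $\ZZ_q[x]$ (this is exactly where the paper replaces $[1,y,\dots,y^{d_x-1}]$ from~\cite{tuitman}), integrality of $p^{\lfloor\log_p(\ell e_0)\rfloor}F$ at every such $P$ translates back into $p^{\lfloor\log_p(\ell e_0)\rfloor}v^{(k)}\in\ZZ_q[x]^{\oplus d_x}$, and then $p^{\lfloor\log_p(\ell e_0)\rfloor}u\in\ZZ_q[x]^{\oplus d_x}$ follows from $\omega_0=\omega-dF$. The non-accumulation you are after is thus a consequence of viewing the entire reduction as one antiderivative on $\mathcal{X}$, not of any identity inside the recursion on $\PP^1$.
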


\begin{proof}
The proof is the same as that of~\cite[Proposition $3.7$]{tuitman} replacing the 
integral basis $[1,y,\ldots,y^{d_x-1}]$ by $[b^0_0,\ldots,b^0_{d_x-1}]$.
\end{proof}

\begin{prop} \label{prop:infiniteprecision}
Let $\omega \in \Omega^1(\mathcal{U})$ be of the form
\[
\omega=(\sum_{i=0}^{d_x-1} w_i(x,x^{-1}) b_i^{\infty}) \frac{dx}{r},
\]
where $w \in \ZZ_q[x,x^{-1}]^{\oplus d_x}$ satisfies $\ord_{\infty}(w) \leq \ord_0(W^{\infty})-\deg(r)+1$. 
We write $m = -\ord_{\infty}(w)-\deg(r)+1$ and define
\begin{align*}
e_{\infty}      &= \max \{ e_P | P \in \mathcal{X} \setminus \mathcal{U}, x(P) = \infty \}. 
\end{align*}
If we represent the class of $\omega$ in $\Hrig^1(U)$ by  
\[
\left(\sum_{i=0}^{d_x-1} u_i b^{\infty}_i \right) \frac{dx}{r},
\]
with $u \in \QQ_q[x,x^{-1}]^{\oplus d_x}$ such that $\ord_{\infty}(u) > \ord_0(W^{\infty})-\deg(r)+1$ as in the proof of Theorem~\ref{thm:cohobasis}, then
\[
p^{\lfloor \log_p(m e_{\infty}) \rfloor} u \in \ZZ_q[x,x^{-1}]^{\oplus d_x}.
\]
\end{prop}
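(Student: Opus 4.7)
The plan is to adapt the argument of Proposition~\ref{prop:finiteprecision} (which itself follows \cite[Proposition 3.7]{tuitman}) to the infinite-reduction setting, by iterating Proposition~\ref{prop:infinitered} and tracking the $p$-adic denominators introduced at each step.

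First I would unwind the iteration explicitly. Starting from $\omega$ with $\ord_{\infty}(w) = -m - \deg(r) + 1$, each application of Proposition~\ref{prop:infinitered} raises $\ord_{\infty}$ by at least one, so after at most $m$ steps we reach a representative $(\sum_i u_i b^{\infty}_i)(dx/r)$ of the same cohomology class with $\ord_{\infty}(u) > \ord_0(W^{\infty}) - \deg(r) + 1$. At the reduction step whose current leading pole order at $\infty$ is $-j - \deg(r) + 1$ (with $1 \leq j \leq m$), inspecting the construction of $v$ in the proof of Proposition~\ref{prop:infinitered} given in \cite{tuitman} shows that the top-order piece $v_j$ is obtained by inverting a matrix of the form $jI - G^{\infty}_{-1}$ applied to the leading coefficient of $w$, where $G^{\infty}_{-1}$ denotes the residue of $G^{\infty}dx$ at $x = \infty$.

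By Proposition~\ref{prop:exps} the eigenvalues $\alpha$ of $G^{\infty}_{-1}$ lie in $\QQ \cap \ZZ_p \cap [0,1)$, and a local analysis at each point $P \in x^{-1}(\infty)$ with ramification index $e_P$ shows that the corresponding exponent has denominator dividing $e_P$, so $e_P(j-\alpha) \in \ZZ$. Each such integer is nonzero of absolute value at most $e_P\, j \leq m\, e_{\infty}$, hence $\text{val}_p(j - \alpha) \geq -\lfloor \log_p(m e_{\infty}) \rfloor$, and the $p$-adic denominator introduced at step $j$ is bounded by $p^{\lfloor \log_p(m e_{\infty}) \rfloor}$.

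The structural point — exactly as in the finite case — is that the cumulative primitive $v = \sum_j v_j$, and hence the reduced form $u$, has denominator controlled by the \emph{maximum} of the per-step denominators rather than by their product, since these are $\QQ_q$-linear combinations and the denominator of a sum of $p$-adic fractions is the worst of the individual denominators. The main delicate point I anticipate is the analysis of the final steps of the iteration, where $j$ is smallest: invertibility of $jI - G^{\infty}_{-1}$ is guaranteed by $j \geq 1 > \alpha$, and cleanly meeting the termination condition $\ord_{\infty}(u) > \ord_0(W^{\infty}) - \deg(r) + 1$ requires the extra control on low-order terms supplied by Remark~\ref{rem:ord0W}, handled exactly as in the finite-case counterpart.
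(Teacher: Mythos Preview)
Your outline has the right shape, but the ``structural point'' is where the argument actually breaks. You claim that the cumulative primitive $v = \sum_j v_j$ has denominator bounded by the maximum of the per-step denominators because ``the denominator of a sum of $p$-adic fractions is the worst of the individual denominators.'' This reasoning ignores the sequential dependence of the iteration: the input at step $j$ is the \emph{output} of step $j+1$, which already carries whatever denominator was introduced there (the lower-order terms of $G^{\infty}$ feed $v_{j+1}$ back into the next input). So your single-step estimate $\ord_p(j-\alpha) \geq -\lfloor \log_p(m e_{\infty}) \rfloor$ bounds only the extra factor picked up at step $j$, not $v_j$ itself; naively the bounds multiply rather than max, which is far too weak. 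Nothing in the eigenvalue analysis alone prevents this accumulation.

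The argument in \cite[Proposition~3.8]{tuitman}, to which the paper defers, sidesteps the iteration entirely. The primitive $f=\sum_i v_i b^{\infty}_i$ is \emph{uniquely determined} (up to low-order terms) by the condition that $\omega - df$ lie in $E_{\infty}$, so one may compute it by any convenient method. Working locally at each $P$ over $x=\infty$: since $\omega \in \Omega^1(\mathcal{U})$ and $[b^{\infty}_0,\dots,b^{\infty}_{d_x-1}]$ is an integral basis, the Laurent expansion of $\omega$ in the \'etale local coordinate $z_P$ has $p$-adically integral coefficients and pole order at most $m e_P$. Term-by-term integration of the principal part in $z_P$ then produces $f$ at $P$ with denominators $1/k$ for $1 \leq k \leq m e_P \leq m e_{\infty}$, so $p^{\lfloor \log_p(m e_{\infty}) \rfloor} f$ is integral at every such $P$. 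The integral-basis property over $\ZZ_q[1/x]$ converts this back into the bound on the coefficient vector $v$, and hence on $u$. You do invoke a local analysis at $P$, but only to control the eigenvalues of $G^{\infty}_{-1}$; the missing idea is to apply it to the full primitive, exploiting uniqueness to replace the recursive computation by one that is visibly non-accumulating.
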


\begin{proof}
The proof is given in~\cite[Proposition $3.8$]{tuitman}
\end{proof}

\begin{rem}
Note that Propositions~\ref{prop:finitered}, \ref{prop:infinitered}, \ref{prop:finiteprecision} and \ref{prop:infiniteprecision}
can be used to give an alternative effective proof of Theorem~\ref{thm:comparison}.
\end{rem}

Recall that in Theorem~\ref{thm:cohobasis} the computation of a basis for
$\Hrig^1(U)$ was reduced to a finite dimensional linear algebra problem. However, the dimension of $\Hrig^1(U)$ is generally 
much higher than the dimension of $\Hrig^1(X)$, so that we would like to compute a basis for this last space. For this we will need 
to compute the kernel of a cohomological residue map. 

\begin{defn}
For a $1$-form $\omega \in \Omega^1(\mathcal{U})$ and a point $P \in \mathcal{X} \setminus \mathcal{U}$, 
we let 
\[ 
res_P(\omega) \in \mathcal{O}_{\mathcal{X},P}/(z_P)
\]
denote the coefficient $a_{-1}$ in the Laurent series expansion
\[
\omega = (a_{-k} z_P^k + \dotsc + a_{-1} z_P^{-1}+ \cdots) dz_P.
\] 
Moreover, we denote
\begin{align*}
res_0          &= \bigoplus_{P \in \mathcal{X} \setminus \mathcal{U} \colon x(P) \neq \infty} res_P, 
&res_{\infty}  &= \bigoplus_{P \in \mathcal{X} \setminus \mathcal{U} \colon x(P) = \infty} res_P.
\end{align*}
\end{defn}

\begin{thm} We have an exact sequence 
\[
\begin{CD}
0 @>>> \Hrig^1(X) @>>> \Hrig^1(U) @>(res_0 \oplus res_{\infty}) \otimes \QQ_q>> \underset{P \in \mathcal{X} \setminus \mathcal{U}}{\bigoplus} \mathcal{O}_{\mathcal{X},P}/(z_P) \otimes \QQ_q.
\end{CD}
\]
\end{thm}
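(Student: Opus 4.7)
My plan is to identify this as a standard Poincar\'e residue exact sequence in algebraic de Rham cohomology of a smooth affine curve in a smooth proper one. By Theorem~\ref{thm:comparison} applied to $\mathbb{U}$, and by the same argument (via Baldassarri--Chiarellotto) applied to the smooth proper curve $\mathbb{X}$, we have $\Hrig^1(U) \cong \HdR^1(\mathbb{U})$ and $\Hrig^1(X) \cong \HdR^1(\mathbb{X})$. So it suffices to establish exactness of
\[
0 \to \HdR^1(\mathbb{X}) \to \HdR^1(\mathbb{U}) \to \bigoplus_{P \in \mathbb{D}} \kappa(P),
\]
where $\mathbb{D} = \mathbb{X} \setminus \mathbb{U}$ and $\kappa(P) = \mathcal{O}_{\mathcal{X},P}/(z_P) \otimes \QQ_q$ (using that $z_P$ is an \'etale local coordinate at $P$ and that $\mathcal{D}_{\mathcal{X}}$ is \'etale over $\ZZ_q$ by Proposition~\ref{prop:goodlift}, so this quotient really is the generic residue field of the corresponding component of $\mathcal{D}_{\mathcal{X}}$).

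I would then invoke the short exact sequence of complexes of sheaves on $\mathbb{X}$,
\[
0 \to \Omega^{\bullet}_{\mathbb{X}} \to \Omega^{\bullet}_{\mathbb{X}}(\log \mathbb{D}) \to \bigoplus_{P \in \mathbb{D}} \kappa(P)[-1] \to 0,
\]
where the quotient is concentrated in degree one and the surjection is the classical Poincar\'e residue. Taking hypercohomology on $\mathbb{X}$, and using Deligne's theorem that $\mathbb{H}^{\bullet}(\mathbb{X}, \Omega^{\bullet}_{\mathbb{X}}(\log \mathbb{D})) \cong \HdR^{\bullet}(\mathbb{U})$ in characteristic zero, yields
\[
0 \to \HdR^0(\mathbb{X}) \to \HdR^0(\mathbb{U}) \to 0 \to \HdR^1(\mathbb{X}) \to \HdR^1(\mathbb{U}) \xrightarrow{\delta} \bigoplus_{P \in \mathbb{D}} \kappa(P) \to \cdots,
\]
since the skyscraper complex has vanishing $\mathbb{H}^0$ and $\mathbb{H}^1$ equal to $\bigoplus_P \kappa(P)$. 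This simultaneously gives the injectivity on the left and the exactness at $\HdR^1(\mathbb{U})$.

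The remaining step is to identify the connecting map $\delta$, transported through the Baldassarri--Chiarellotto comparison, with the residue map $(res_0 \oplus res_\infty) \otimes \QQ_q$ from the statement. This is purely local at each $P \in \mathbb{D}$: by the snake lemma, $\delta$ sends a class represented by $\omega$, locally lifted to a log form at $P$, to the coefficient of $dz_P/z_P$ in its logarithmic Laurent expansion, which is exactly the coefficient $a_{-1}$ in $\omega = (\ldots + a_{-1} z_P^{-1} + \ldots) dz_P$. The main obstacle I expect is this last compatibility: although it is essentially bookkeeping, one must carefully match the conventions between the algebraic Poincar\'e residue in the generic fibre and the residue $res_P$ on the overconvergent/rigid side used in the body of the text. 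Once this is done, the long exact sequence above specialises to the one claimed.
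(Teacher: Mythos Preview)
Your argument is correct and is exactly the standard Poincar\'e residue argument the paper has in mind; the paper itself gives no proof beyond ``This is well known (but hard to find in the literature).'' One small terminological slip: the map you call $\delta$ is not the connecting homomorphism but rather the map directly induced on $\mathbb{H}^1$ by the surjection of complexes $\Omega^{\bullet}_{\mathbb{X}}(\log \mathbb{D}) \twoheadrightarrow \bigoplus_P \kappa(P)[-1]$, so there is no snake lemma involved---a class in $\mathbb{H}^1$ represented by a global log form is sent to its Poincar\'e residues tautologically, and your local identification with $res_P$ is then immediate.
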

\begin{proof}
This is well known (but hard to find in the literature).
\end{proof}

The kernels of $res_0$ and $res_{\infty}$ can be computed without having to compute the
Laurent series expansions at all $P \in \mathcal{X} \setminus \mathcal{U}$ using the
following two propositions. We start with the infinite residues.

\begin{prop} \label{prop:kerresinfty}
Let $\omega \in \Omega^1(\mathbb{U})$ be a $1$-form of the form 
\[
\omega=\left(\sum_{i=0}^{d_x-1}u_i(x,x^{-1}) b_i^{\infty} \right) \frac{dx}{r},
\]
where $u \in \QQ_q[x,x^{-1}]^{\oplus d_x}$ satisfies $\ord_{\infty}(u)>-\deg(r)$, and let a vector
$v \in \QQ_q^{\oplus d_x}$ be defined by $v= \left(x^{1-\deg(r)}u \right)\lvert_{x=\infty}$. 
Moreover, let the residue matrix $G^{\infty}_{-1} \in M_{d_x \times d_x}(\QQ_q)$ be defined 
as in Proposition~\ref{prop:infinitered}, and let $\mathcal{E}^{\infty}_\lambda$ denote the 
(generalised) eigenspace of $G^{\infty}_{-1}$ with eigenvalue $\lambda$, so that $\QQ_q^{\oplus d_x}$ 
decomposes as $\bigoplus \mathcal{E}^{\infty}_{\lambda}$. Then
\[
res_{\infty}(\omega)=0  \; \; \; \Leftrightarrow \; \; \; \mbox{the projection of $v$ onto $\mathcal{E}^{\infty}_0$ vanishes}.
\]
\end{prop}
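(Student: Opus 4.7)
The plan is to substitute $t = 1/x$ so as to reduce the problem to a local residue calculation at each geometric point above $\infty$, and then match the resulting vanishing condition to the eigenspace statement via the Jordan decomposition of $G^\infty_{-1}$. Setting $t = 1/x$, $dx = -dt/t^2$, $r(1/t) = t^{-\deg(r)}\tilde r(t)$ (with $\tilde r(0)$ a unit in $\ZZ_q$ by Assumption~\ref{assump:goodlift}(2)), and using $\ord_\infty(u) > -\deg(r)$, the form $\omega$ becomes
\[
\omega = -\frac{\tilde u(t)^T \mathbf{b}}{\tilde r(t)} \cdot \frac{dt}{t},
\]
where $\mathbf{b} = (b^\infty_0, \dotsc, b^\infty_{d_x-1})^T$ and $\tilde u(t) \in \QQ_q[t]^{d_x}$ satisfies $\tilde u(0) = v$. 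At a point $P$ above $\infty$ with local parameter $z = z_P$ and ramification $e_P$, $t = z^{e_P} c(z)$ for a unit $c$, so $dt/t = e_P\, dz/z + (\text{holomorphic})\,dz$. Expanding $\mathbf{b}(z) = \sum_{k\ge 0}\mathbf{b}^{(k)}(P)\,z^k$ and $\tilde u(t)/\tilde r(t) = v/\tilde r(0) + O(z^{e_P})$, extracting the coefficient of $dz/z$ gives
\[
res_P(\omega) = -\frac{e_P}{\tilde r(0)}\, v^T \mathbf{b}^{(0)}(P),
\]
so $res_\infty(\omega) = 0$ iff $v \perp \mathbf{b}^{(0)}(P)$ for every $P$ above $\infty$.

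To connect this with the eigenspace condition, rewrite the connection as $d\mathbf{b} = (G^\infty)^T\mathbf{b}\,dx$ and use Proposition~\ref{prop:con} to obtain, in the $t$-coordinate, $t\,d\mathbf{b}/dt = (G^\infty_{-1})^T\mathbf{b} + O(t)\mathbf{b}$; pulling back via $t = z^{e_P}c(z)$ yields the Fuchsian equation $z\,d\mathbf{b}/dz = e_P (G^\infty_{-1})^T\mathbf{b} + O(z)\mathbf{b}$, whose $z^0$ coefficient gives $\mathbf{b}^{(0)}(P) \in \ker(G^\infty_{-1})^T$. To see that these vectors actually span the whole generalized $0$-eigenspace $\mathcal{E}_0^{(G^\infty_{-1})^T}$, I would use Assumption~\ref{assump:goodlift}(3) through the isomorphism
\[
\mathcal{R}^\infty/(1/x) \otimes \bar\QQ_q \cong \prod_{P \text{ above } \infty} \mathcal{O}_{\mathcal{X},P}/(z_P^{e_P})
\]
from Proposition~\ref{prop:goodlift}: switching to the alternative basis $\{z_P^k : 0 \le k < e_P\}$ on each factor, a direct local computation of $t\,\partial_t$ shows that the connection residue acts diagonally with eigenvalues $k/e_P$. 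Consequently $(G^\infty_{-1})^T$ is diagonalizable, $\dim\mathcal{E}_0^{(G^\infty_{-1})^T}$ equals the number $N_\infty$ of points above $\infty$, and (by the same isomorphism restricted to $k = 0$) the $\mathbf{b}^{(0)}(P)$'s are linearly independent, hence span $\mathcal{E}_0^{(G^\infty_{-1})^T}$.

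The proof is finished by the standard Jordan-duality identity $(\mathcal{E}_0^{A^T})^\perp = \bigoplus_{\lambda\neq 0}\mathcal{E}_\lambda^A$ (valid for any linear operator $A$, because generalized eigenspaces of $A$ and $A^T$ pair non-degenerately for equal eigenvalues and are orthogonal otherwise): applied to $A = G^\infty_{-1}$, this translates the condition $v \perp \mathbf{b}^{(0)}(P)$ for all $P$ into the vanishing of the projection of $v$ onto $\mathcal{E}^\infty_0$. The main technical obstacle is the diagonalizability and dimension count in the preceding paragraph: this is where Assumption~\ref{assump:goodlift}(3) is genuinely used, and careful bookkeeping with transposes is required to match the eigenspace of $(G^\infty_{-1})^T$ coming from the Fuchsian analysis to the eigenspace of $G^\infty_{-1}$ as in the statement.
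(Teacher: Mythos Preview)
Your argument is correct and follows what is essentially the only natural route to this statement: pass to the local coordinate $t=1/x$, compute the residue at each $P$ above $\infty$ in terms of the leading Taylor coefficient $\mathbf{b}^{(0)}(P)$ of the integral basis, identify these vectors with the $0$-eigenspace of the transposed residue matrix via the indicial equation of the Fuchsian connection, and conclude by duality. The paper itself does not give a proof here but refers to \cite[Proposition~3.13]{tuitman}; that argument proceeds along the same lines, so your approach matches the intended one.

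Two small remarks on the write-up. First, in the local basis $\{z_P^k : 0\le k<e_P\}$ the residue of the connection is only upper triangular (because $t\partial_t$ applied to $z_P^k$ picks up higher-order corrections from the unit $c(z)$), not literally diagonal; but since the diagonal entries $k/e_P$ are pairwise distinct this still gives diagonalisability with the stated eigenvalues, so your conclusion is unaffected. Second, your linear-independence claim for the $\mathbf{b}^{(0)}(P)$ is exactly the statement that the evaluation functionals $\mathrm{ev}_P$ on $\mathcal{R}^\infty/(t)\otimes\bar{\QQ}_q$ are independent, which is immediate from the product decomposition you invoke; it might be worth stating this explicitly rather than leaving it as ``by the same isomorphism restricted to $k=0$''.
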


\begin{proof}
The proof is given in~\cite[Proposition 3.13]{tuitman}.
\end{proof}

We now move on to the finite residues.

\begin{prop} \label{prop:kerres} Let $\omega \in \Omega^1(\mathbb{U})$ be a $1$-form of the form 
\[
\omega=\left(\sum_{i=0}^{d_x-1}u_i(x) b^0_i \right) \frac{dx}{r},
\]
with $u \in \QQ_q[x]^{\oplus d_x}$. For every geometric point $x_0 \in \mathcal{D}_{\PP^1}(\bar{\QQ}_q) \setminus \infty$, let the vector $v_{x_0} \in \bar{\QQ}_q^{\oplus d_x}$ 
be defined by $v_{x_0}=u\lvert_{x=x_0}$. Moreover, let the residue matrix $G_{-1}^{x_0} \in M_{d_x \times d_x}(\bar{\QQ}_q)$ be defined as 
$G_{-1}^{x_0}=(x-x_0)G^0\lvert_{x=x_0}$, and let $\mathcal{E}^{x_0}_\lambda$ denote the (generalised) eigenspace of $G^{x_0}_{-1}$ with eigenvalue $\lambda$, 
so that $\bar{\QQ}_q^{\oplus d_x}$ decomposes as $\bigoplus \mathcal{E}^{x_0}_{\lambda}$. Then 
\begin{align*}
res_0(\omega)=0 \; \; \; \Leftrightarrow \; \; \; &\mbox{the projection of $v_{x_0}$ onto $\mathcal{E}^{x_0}_0$ vanishes} \\ &\mbox{for all } x_0 \in \mathcal{D}_{\PP^1}(\bar{\QQ}_q) \setminus \infty.
\end{align*}
\end{prop}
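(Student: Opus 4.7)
The plan is to imitate the proof of Proposition~\ref{prop:kerresinfty} for infinite residues, doing a local analysis at each geometric point $x_0 \in \mathcal{D}_{\PP^1}(\bar{\QQ}_q)\setminus\infty$ in turn. Since residues are intrinsically local, it suffices to prove the equivalence one $x_0$ at a time, and so I would fix such an $x_0$ and pass to the formal completion. Because $b^0_0,\ldots,b^0_{d_x-1}$ is an integral basis for $\mathcal{R}^0$, we have $\mathcal{R}^0 \otimes_{\ZZ_q[x]} \bar{\QQ}_q[[x-x_0]] \cong \prod_{P|x_0} \bar{\QQ}_q[[z_P]]$, where each factor has rank $e_P$ over $\bar{\QQ}_q[[x-x_0]]$ and the local parameters can be chosen so that $x-x_0 = z_P^{e_P}$.

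First I would compute the residues directly. Since the $b^0_i$ are regular at every $P|x_0$ and $r$ vanishes at $P$ to order $e_P$ (so that $dx/r$ has a simple pole at $P$ with residue $e_P/r'(x_0)$), a standard local calculation gives
\[
res_P(\omega) = \frac{e_P}{r'(x_0)} \Bigl(\sum_{i=0}^{d_x-1} u_i(x_0)\, b^0_i\Bigr)(P),
\]
where $(\cdot)(P)$ denotes reduction modulo $z_P$. Hence $res_P(\omega)=0$ for all $P|x_0$ iff the image of $\sum u_i(x_0) b^0_i$ in $\prod_{P|x_0} \kappa(P)$ vanishes.

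It remains to match this condition with the vanishing of the projection of $v_{x_0}$ onto $\mathcal{E}^{x_0}_0$. By Proposition~\ref{prop:exps} the exponents of $G^0 dx$ at $x_0$ all lie in $[0,1)$, so no two differ by a nonzero integer. By Levelt's theorem on regular singular formal connections, this non-resonance supplies a formal gauge $\Phi \in Gl_{d_x}(\bar{\QQ}_q[[x-x_0]])$ taking $d+G^0 dx$ to the Euler model $d+G^{x_0}_{-1}/(x-x_0)\, dx$. On the other hand, in the formal local basis $(z_P^j)_{P|x_0,\,0\le j<e_P}$ the identity $d z_P^j/dx = (j/e_P)\, z_P^j/(x-x_0)$ puts the same connection into a second Euler form with residue $\mathrm{diag}(j/e_P)$. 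Uniqueness of the formal isomorphism class forces $G^{x_0}_{-1}$ to be conjugate to $\mathrm{diag}(j/e_P)$, hence semisimple; and the reduction $\Phi|_{x=x_0}$ identifies $\mathcal{E}^{x_0}_0$ with the $\bar{\QQ}_q$-span of $\{z_P^0 : P|x_0\}$, i.e.\ with the constant-term projections. Combining with the residue formula above, the projection of $v_{x_0}$ onto $\mathcal{E}^{x_0}_0$ vanishes iff every constant term $(\sum u_i(x_0) b^0_i)(P)$ vanishes iff $res_P(\omega)=0$ for all $P|x_0$, and taking the conjunction over $x_0$ finishes the proof.

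The main obstacle is the third step: one has to invoke the non-resonance supplied by Proposition~\ref{prop:exps} together with a Levelt-type normal form theorem, and then carefully track the reduction modulo $(x-x_0)$ of $\Phi$ to identify $\mathcal{E}^{x_0}_0$ with the subspace of $\mathcal{R}^0/(x-x_0)\otimes\bar{\QQ}_q$ cut out by the maximal ideals of the local rings at the $P|x_0$. Once this identification is in place, the argument is an essentially routine transcription of~\cite[Proposition~3.13]{tuitman} with $\infty$ replaced by $x_0$.
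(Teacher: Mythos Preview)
Your approach is correct and is exactly what the paper intends: its proof reads simply ``completely analogous to that of Proposition~\ref{prop:kerresinfty}.'' One small clean-up: the Levelt gauge $\Phi$ is an unnecessary detour, and in your last step it is not $\Phi|_{x=x_0}$ that identifies $\mathcal{E}^{x_0}_0$ with the span of $\{z_P^0:P\mid x_0\}$ (indeed $\Phi|_{x=x_0}$ commutes with $G^{x_0}_{-1}$, so it only sends $\mathcal{E}^{x_0}_0$ to itself); the matrix that does the work is the reduction at $x=x_0$ of the change of basis from $[b^0_0,\ldots,b^0_{d_x-1}]$ to $\{z_P^j\}_{P\mid x_0,\,0\le j<e_P}$, which is already regular and invertible there and conjugates $G^{x_0}_{-1}$ directly to $\mathrm{diag}(j/e_P)$.
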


\begin{proof}
The proof is completely analogous to that of Proposition~\ref{prop:kerresinfty}.
\end{proof}

\section{The complete algorithm and its complexity}

\label{sec:complete}

In this section we describe all the steps in the algorithm and determine bounds for the complexity. Recall that $X$ is
a curve of genus $g$ over a finite field $\FF_q$ with $q=p^n$ and that $d_x$ and $d_y$ denote the degrees of the defining
polynomial $Q$ in the variables $y$ and $x$, respectively.
All computations are carried out to $p$-adic precision $N$ which will be specified later.  We use the
$\SoftOh(-)$ notation that ignores logarithmic factors, i.e. $\SoftOh(f)$ denotes the class of functions that
lie in $\BigOh(f \log^k(f))$ for some $k \in \NN$. For example, two elements of $\ZZ_q$ can be multiplied 
in time $\SoftOh(\log(p)nN)$. We let $\theta$ denote an exponent for matrix multiplication, so that two 
$k \times k$ matrices can be multiplied in $\BigOh(k^{\theta})$ ring operations.
It is known that $\theta \geq 2$ and that one can take $\theta \leq 2.3729$ \cite{williams2012}. We start with some bounds that will 
be useful later on.

\begin{prop} \label{prop:degr} Let $\Delta$, $s$, $r$ be defined as in Section~\ref{sec:lift} and $e_0, e_{\infty}$
as in Section~\ref{sec:coho}. We have:
\begin{subequations}
\begin{alignat}{3}
\deg(\Delta), \deg(r), \deg(s) & \leq 2(d_x-1) d_y   &\; \in \;& \BigOh(d_x d_y), \label{eq:bound1} \\
e_0, e_{\infty}                  & \leq  d_x              &\; \in \;& \BigOh(d_x), \label{eq:bound3} \\
g                              & \leq (d_x-1)(d_y-1) &\; \in \;& \BigOh(d_x d_y). \label{eq:bound4}
\end{alignat}
\end{subequations}
\end{prop}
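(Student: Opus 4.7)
The plan is to establish each bound in turn.

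For $\deg(\Delta) \leq 2(d_x-1) d_y$: by Proposition~\ref{prop:s}, $\Delta = \det(\Sigma)$ where $\Sigma$ is the $(2d_x-1)\times(2d_x-1)$ Sylvester matrix from the proof of that proposition, with entries in $\ZZ_q[x]$ of $x$-degree at most $d_y$. The naive entry bound would give only $(2d_x-1) d_y$, so one must exploit monic-ness. The key observation is that the row of $\Sigma$ corresponding to the coefficient of $y^{2d_x-2}$ (the highest output basis element) has only two nonzero entries: a $1$ contributed by the leading term $y^{d_x}$ of $\mathcal{Q}$, and a $d_x$ contributed by the leading term $d_x y^{d_x-1}$ of $\partial_y \mathcal{Q}$, both constants. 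Expanding $\det(\Sigma)$ along this row, every permutation uses one of these two constants, so the remaining $2d_x-2$ factors contribute $x$-degree at most $(2d_x-2) d_y = 2(d_x-1) d_y$. The bound on $r$ follows at once since $r \mid \Delta$ by Definition~\ref{defn:Delta}.

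For $\deg(s) \leq 2(d_x-1) d_y$, I would return to the same Sylvester map. Since $\Delta$ as an element of $W_{2d_x-1}$ corresponds to $\Delta \cdot e_1$ (the basis vector for $y^0$) in the output basis, Cramer's rule expresses the preimage as $(a,b)^T = \mathrm{adj}(\Sigma) \cdot e_1$, i.e.\ as the first column of $\mathrm{adj}(\Sigma)$. Its entries, which include the coefficients of $s$ in the basis $[1, y, \ldots, y^{d_x-1}]$, are $(2d_x-2)\times(2d_x-2)$ minors of $\Sigma$, hence polynomials in $x$ of degree at most $(2d_x-2) d_y = 2(d_x-1) d_y$.

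The bound $e_0, e_\infty \leq d_x$ is immediate: the morphism $x: \mathcal{X} \to \mathbf{P}^1_{\ZZ_q}$ has degree $d_x$, so $\sum_{P \mapsto x_0} e_P = d_x$ on every fiber, whence $e_P \leq d_x$ at each $P \in \mathcal{X} \setminus \mathcal{U}$. For $g \leq (d_x-1)(d_y-1)$: by construction $X$ is birational to the affine plane curve $Q(x,y)=0$, whose closure in $\mathbf{P}^1_{\FF_q} \times \mathbf{P}^1_{\FF_q}$ has bidegree $(d_x, d_y)$. A smooth curve of bidegree $(d_x, d_y)$ in $\mathbf{P}^1 \times \mathbf{P}^1$ has arithmetic genus $(d_x-1)(d_y-1)$ by the adjunction formula; since $X$ is the smooth normalization of this (possibly singular) curve, its geometric genus $g$ is at most this arithmetic genus.

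The main obstacle is the sharpness of the $\deg(\Delta)$ bound: the naive Sylvester entry-count overshoots by a factor of $d_y$, and one genuinely needs the structural observation that one row of $\Sigma$ contains only constant entries (equivalently, that $\mathcal{Q}$ is monic in $y$). Given this, the remaining bounds follow essentially immediately.
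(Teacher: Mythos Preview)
Your argument is correct. The paper itself does not give a proof here; it simply cites \cite[Proposition~4.1]{tuitman}, so there is no in-paper argument to compare against. What you wrote is the standard approach and is almost certainly what the cited paper does as well: the Sylvester determinant bound exploiting that $\mathcal{Q}$ is monic in~$y$ (so one row of $\Sigma$ has only the constant entries $1$ and $d_x$), the Cramer/adjugate bound for the coefficients of $s$, the trivial bound $e_P\le d_x$ from $\sum_{P\mapsto x_0} e_P=d_x$, and the bidegree genus inequality via $\PP^1\times\PP^1$.
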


\begin{proof} 
The proof is given in~\cite[Proposition $4.1$]{tuitman}.
\end{proof} 

Since in Assumption~\ref{assump:goodlift} we assumed that the matrices $W^0,W^{\infty}$ were given to us,
we cannot say much about their pole orders $\ord_0, \ord_{\infty}$ and $\ord_{\neq \infty}$. However, for 
a rigorous complexity analysis we need some bounds:
\begin{assump} \label{assump:ordW}
We will assume that $-\ord_0, -\ord_{\infty}, -\ord_{\neq \infty}$ of the matrices $W^0, W^{\infty}$ 
and their inverses are contained in $\BigOh(d_x d_y)$.  
\end{assump}
Note that this is a very reasonable assumption, since the matrices $W^0,W^{\infty}$ returned by (for example) 
the algorithm from~\cite{hess} satisfy it. Indeed,  $(W^0)^{-1}$ can be chosen such that the entries 
and the determinant are all polynomials of degree $\BigOh(\deg(\Delta))$ and $W^{-1}=(W^{\infty})^{-1} W^0$ can be 
chosen to be diagonal and such that the entries are all monomials of degree $\BigOh(\deg(\Delta))$. Therefore,
by Proposition~\ref{prop:degr} we have that Assumption~\ref{assump:ordW} is satisfied.

\subsection{Step I: Determine a basis for the cohomology} \mbox{ } \\

We want to find $\omega_1,\dotsc,\omega_{\kappa} \in (E_0 \cap E_{\infty}) \cap \Omega^1(\mathcal{U})$ such that:
\begin{enumerate}
\item $[\omega_1,\dotsc,\omega_{\kappa}]$ is a basis for $\Hrig^1(U) \cong (E_0 \cap E_{\infty})/d(B_0 \cap B_{\infty})$,
\item the class of every element of $(E_0 \cap E_{\infty}) \cap \Omega^1(\mathcal{U})$ in $\Hrig^1(U)$ has $p$-adically integral 
      coordinates with respect to $[\omega_1,\dotsc,\omega_{\kappa}]$,
\item $[\omega_1,\dotsc,\omega_{2g}]$ is a basis for the kernel of $res_0 \oplus res_{\infty}$ and hence for the subspace $\Hrig^1(X)$
      of $\Hrig^1(U)$.
\end{enumerate} 
\begin{proof}
The only difference with~\cite[Section 4.1]{tuitman} is that for an element
\[
\left( \sum_{i=0}^{d_x-1} u_i(x) b^0_i \right) \frac{dx}{r} \in E_0 \cap E_{\infty},
\]
we now have that $\deg(u) \leq \deg(r)-2-\ord_0(W)-\ord_{\infty}(W)$, but this is still $\BigOh(d_x d_y)$ by Assumption~\ref{assump:ordW}. 
Therefore, the time complexity of this step remains 
\[
\SoftOh \left(\log(p) d_x^{2\theta} d_y^{\theta} n N \right). \qedhere
\] 
\end{proof}

\subsection{Step II: Compute the map $\Frob_p$} \mbox{ } \\

We use Theorem~\ref{thm:froblift} to compute approximations:
\begin{align*}
\Frob_p(1/r) &= \alpha_i+ \mathcal{O}(p^{2^i}), \\
\Frob_p(y)           &= \beta_i+ \mathcal{O}(p^{2^i}),
\end{align*}
for $i=1, \dotsc, \nu=\lceil \log_2(N) \rceil$ as in~\cite[Section 4.2]{tuitman}. 
We again carry out all computations using $r$-adic expansions (and not $\Delta$-adic ones!) for the elements of 
$\mathcal{R}$ and $\mathcal{S}$. Note that by Proposition~\ref{prop:convbound} and Assumption~\ref{assump:ordW}, a ring operation in 
$\mathcal{R}$ still takes time $\SoftOh(pd_x^2 d_y (N+d_x) nN)$ and a ring operation in $\mathcal{S}$ time $\SoftOh(pd_x d_y (N+d_x) nN)$. 
Recall that the image of an element of $\QQ_q$ under $\sigma$ can be computed in time $\SoftOh(\log^2(p)n+\log(p)nN)$ by~\cite{hubrechts}. 
As in~\cite[Section 4.2]{tuitman} $(\alpha_{\nu},\beta_{\nu})$ can therefore be computed in time $\SoftOh\left(p d_x^3 d_y  \bigl(N+d_x \bigr) n N\right)$. 

Let $\Phi,\Psi \in M_{d_x \times d_x}(\mathcal{S}^{\dag})$ be the matrices of $\Frob_p$ on $\mathcal{R}^{\dag}$ with respect to the bases 
$[1,y,\ldots,y^{d_x-1}]$ and $[b^0_0/r,\ldots,b^0_{d_x-1}/r]$ over $\mathcal{S}^{\dag}$, respectively. Note that this notation is consistent
with that of Proposition~\ref{prop:convbound}. Then $\Phi$ can be computed from $\beta_{\nu}$ using $\BigOh(d_x)$ ring operations in $\mathcal{R}$. 
Moreover, it follows from the formula
\[
\Psi = (W^0/r) \Phi ((W^{0})^{-1} r)^{F_p}
\]
and Assumption~\ref{assump:ordW}, that $\Psi$ can be computed from $\Phi$ and $\alpha_{\nu}$ 
using $\BigOh(d_x^{\theta} + d_x d_y)$ ring operations in $\mathcal{S}$ and 
$\BigOh\left((d_x d_y) \deg(r) d_x^2 \right) \subset \BigOh(d_x^4 d_y^2)$
applications of $\sigma$. Therefore, the matrix $\Psi$ can be computed from $(\alpha_{\nu},\beta_{\nu})$ in time 
$\SoftOh\left(pd_x^{\theta+1} d_y^2 (N+d_x) nN\right)$. Note that having to compute the matrix $\Psi$ is the main difference
compared to~\cite[Section 4.2]{tuitman}.

Finally, for each $\omega_i=\left(\sum_{k=0}^{d-1} u_k(x) b^0_k \right)dx/r$ with $1 \leq i \leq 2g$, we compute
\begin{align} \label{eq:Fp1}
\Frob_p (\omega_i)
&=\sum_{j=0}^{d_x-1} \left( \sum_{k=0}^{d_x-1} p x^{p-1} u_k^{\sigma}(x^p) \psi_{j,k} \right) b^0_j \frac{dx}{r} + \BigOh\left(p^N\right).
\end{align}
For a single $\omega_i$ this takes $\BigOh(d_x^2)$ ring operations in $\mathcal{S}$ and 
\[
\BigOh\left(d_x \left(\deg(r)-2-\ord_0(W)-\ord_{\infty}(W) \right) \right) \subset \BigOh(d_x^3 d_y)
\]
applications of $\sigma$. Hence the complete set of $\Frob_p(\omega_i)$ can still be computed in time
\[
\SoftOh\left(g p d_x^3 d_y  \left(N+d_x \right) n N\right) \subset \SoftOh\left(p d_x^4 d_y^2  \left(N+d_x \right) n N\right),
\]
which also remains the time complexity of this step.

\subsection{Step III: Reduce back to the basis} \mbox{ } \\

We want to find the matrix $\mathcal{F} \in M_{2g \times 2g}(\QQ_q)$ such that
\[
\Frob_p(\omega_i) = \sum_{j=1}^{2g} \mathcal{F}_{j,i} \omega_j
\]
in $\Hrig^1(U)$. In the previous step, we have obtained an approximation
\begin{equation} 
\Frob_p(\omega_i) = \sum_{j \in J} \left( \sum_{k=0}^{d_x-1} \frac{w_{i,j,k}(x)}{r^j} b^0_k \right) \frac{dx}{r} +\BigOh\left(p^N\right),
\end{equation}
where $J \subset \ZZ$ is finite and $w_{i,j,k}(x) \in \ZZ_q[x]$ satisfies
$\deg(w_{i,j,k}(x))< \deg(r)$ for all $i,j,k$. We now use Proposition~\ref{prop:finitered} and Proposition~\ref{prop:infinitered}
(repeatedly) to reduce this $1$-form to an element of $E_0 \cap E_{\infty}$ as in Theorem~\ref{thm:cohobasis}.

To carry out the reduction procedure, it is sufficient to solve 
a linear system with parameter ($\ell$ or $m$, respectively) only once in 
Propositions~\ref{prop:finitered} and~\ref{prop:infinitered}.
After that, every reduction step corresponds to a multiplication of a vector by a $d_x \times d_x$ matrix 
(over $\QQ_q[x]/(r)$ or $\QQ_q$, respectively). 

The time complexity is the same as in~\cite[Section 4.3]{tuitman}, with only one small correction: by Assumption~\ref{assump:ordW},
the number of reductions steps at the points lying over $x=\infty$ is $\BigOh(pd_x d_y)$, so that all $\Frob_p(\omega_i)$ can 
be reduced in time $\SoftOh(p d_x^4 d_y^2 nN)$. Therefore, the time complexity of this step remains
\[
\SoftOh(pd_x^4 d_y^2 n N^2+d_x^5 d_y^3 n N).
\]

\subsection{Step IV: Determine $Z(X,T)$} \mbox{ } \\

It follows from the Lefschetz formula for rigid cohomology that
\begin{align*}
Z(X,T)                   &= \frac{\chi(T)}{(1-T)(1-qT)}, 
\intertext{where we have} 
\chi(T) &= \det\bigl(1-\Frob_p^n T|\Hrig^1(X) \bigr).
\end{align*}
This polynomial can be computed exactly the same way as in~\cite[Section 4.4]{tuitman}, so that the time complexity of 
this step is still 
\[
\SoftOh(\log^2(p) g^{\theta} n N) \subset \SoftOh(\log^2(p)(d_x d_y)^{\theta} n N).
\]

\subsection{The $p$-adic precision} \mbox{ } \\ \label{sec:precision}

So far we have only obtained an approximation to $\chi(T)$, since 
we have computed to $p$-adic precision $N$. Moreover, because of loss of precision
in the computation, in general $\chi(T)$ will not even be correct to precision~$N$. 
So what precision~$N$ is sufficient to determine $\chi(T)$ exactly? Although the 
bounds used in~\cite{tuitman} were good enough to obtain the right complexity estimate,
they were sometimes not sharp enough in practice. In this section we will carry 
out a much more detailed analysis and will obtain bounds that are usually sharp.

\begin{prop} \label{prop:newgir} In order to recover $\chi(T) \in \ZZ[T]$ exactly, it is sufficient to know it to $p$-adic precision
\begin{align*}
\max_{1 \leq i \leq g} \left\{ \left\lfloor \log_p\left(\frac{4g}{i}\right) + \left(\frac{ni}{2}\right) \right\rfloor + 1 \right\} 
\; \; \; \in \BigOh(d_x d_y n). 
\end{align*}
\end{prop}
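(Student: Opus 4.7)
The plan is to reformulate the statement as a uniqueness result: two degree-$2g$ polynomials $\chi, \chi' \in \ZZ[T]$ both satisfying the Weil properties (inverse roots on $|z| = q^{1/2}$ and the functional equation $c_{2g-i} = q^{g-i} c_i$) must coincide whenever $\chi \equiv \chi' \pmod{p^N}$ for $N$ exceeding the stated maximum. Since the true $\chi$ is such a Weil polynomial, this uniqueness is exactly what is needed.

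First I would use the functional equation to reduce to controlling only the first $g$ coefficients, equivalently the first $g$ power sums $s_i = \sum_{j} \alpha_j^i \in \ZZ$ of the inverse roots, which are bounded by the Hasse--Weil estimate $|s_i| \leq 2g q^{i/2}$. Let $i_0$ be the smallest index at which $s_{i_0} \neq s_{i_0}'$; necessarily $i_0 \leq g$. A short induction on Newton's identity
\[
j c_j = -s_j - \sum_{k=1}^{j-1} c_{j-k} s_k,
\]
which is solvable for $c_j$ in $\QQ$, shows that $c_j = c_j'$ for all $j < i_0$.

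The crux is to exploit this cancellation at index $i_0$. Expanding
\[
s_{i_0} - s_{i_0}' = -i_0(c_{i_0} - c_{i_0}') - \sum_{k=1}^{i_0-1} \bigl[(c_{i_0-k} - c_{i_0-k}')s_k + c_{i_0-k}'(s_k - s_k')\bigr],
\]
every term inside the sum is annihilated by the minimality of $i_0$, so one is left with the clean formula $s_{i_0} - s_{i_0}' = -i_0(c_{i_0} - c_{i_0}')$. Since $c_{i_0} - c_{i_0}' \in p^N \ZZ$, the nonzero integer $s_{i_0} - s_{i_0}'$ is divisible by $i_0 p^N$. This extra factor $i_0$ is precisely what produces the $-\log_p(i)$ savings compared to the naive bound.

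Combining with $|s_{i_0} - s_{i_0}'| \leq 4g q^{i_0/2}$, nonvanishing would force $i_0 p^N \leq 4g q^{i_0/2}$, i.e.\ $N \leq \log_p(4g/i_0) + n i_0/2$; exceeding the maximum of this quantity over $i_0 \in \{1, \ldots, g\}$ then produces a contradiction and forces $\chi = \chi'$. The asymptotic bound $\BigOh(d_x d_y n)$ follows from $g \in \BigOh(d_x d_y)$ via Proposition~\ref{prop:degr}. The main obstacle is spotting the cancellation of the cross-terms that upgrades the divisibility from $p^N$ to $i_0 p^N$; once this is seen, the remainder of the argument is a routine archimedean-versus-$p$-adic size comparison.
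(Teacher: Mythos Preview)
Your argument is correct: the Newton--Girard identities combined with the Weil bound $|s_i|\leq 2gq^{i/2}$ give exactly the stated precision, and the key observation that $s_{i_0}-s_{i_0}'=-i_0(c_{i_0}-c_{i_0}')$ is what produces the extra factor $i_0$ in the denominator. The paper's own proof does not spell this out but merely cites Kedlaya's Lemma~1.2.3 in~\cite{cycliccubic}; your write-up is precisely the argument behind that lemma (as the label \texttt{newgir} suggests), and the asymptotic $\BigOh(d_xd_yn)$ is obtained in both cases from Proposition~\ref{prop:degr}.
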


\begin{proof}
The expression for the precision is a straightforward consequence of a result of Kedlaya, which can be found in \cite[Lemma 1.2.3]{cycliccubic}. 
That this precision is $\BigOh(d_x d_y n)$ follows from the bound on $g$ from Proposition~\ref{prop:degr}.
\end{proof}

\begin{defn} Let $H^1_{cris}(\mathcal{X},\mathcal{D}_{\mathcal{X}})$ denote the log-crystalline cohomology of $\mathcal{X}$ along the divisor $\mathcal{D}_{\mathcal{X}}$.
We define the following $\ZZ_q$-lattices in $\Hrig^1(U)$:
\begin{align*}
\Lambda_{E_0 \cap E_{\infty}} &=  \fIm \left( (E_0 \cap E_{\infty}) \cap \Omega^1(\mathcal{U})  \rightarrow \Hrig^1(U) \right), \\
\Lambda_{cris}                &=  \fIm \left( H^1_{cris}(\mathcal{X},\mathcal{D}_{\mathcal{X}}) \rightarrow \Hrig^1(U) \right).
\end{align*}
\end{defn}

\begin{defn} Let us denote
\begin{align*}
\delta_1 &= \big\lfloor \log_p \bigl( -(\ord_0(W)+1) e_{\infty} \bigl) \big\rfloor, \\
\delta_2 &=\big\lfloor \log_p\bigl( (\lfloor (2g-2)/d_x \rfloor+1) e_{\infty} \bigr) \big\rfloor, \\
\delta   &=\delta_1+\delta_2.
\end{align*}
\end{defn}

\begin{prop} \label{prop:lattice} We have the following inclusions of lattices:
\[p^{\delta_1} \Lambda_{E_0 \cap E_{\infty}} \subset \Lambda_{cris} \subset p^{-\delta_2} \Lambda_{E_0 \cap E_{\infty}}.\]
\end{prop}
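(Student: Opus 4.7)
The plan is to compare the two integral lattices in $\Hrig^1(U)$ by bounding the $p$-adic denominators that arise when passing between representatives in $(E_0 \cap E_{\infty}) \cap \Omega^1(\mathcal{U})$ and representatives with logarithmic poles along $\mathcal{D}_{\mathcal{X}}$. Both inclusions will follow from iterated applications of Proposition~\ref{prop:infinitered}, with the resulting $p$-adic losses controlled by the bookkeeping in Proposition~\ref{prop:infiniteprecision}.

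For the first inclusion, I would take $\omega \in (E_0 \cap E_{\infty}) \cap \Omega^1(\mathcal{U})$ and analyse its pole behaviour along $\mathcal{D}_{\mathcal{X}}$. At each finite point $P \in \mathcal{D}_{\mathcal{X}}$ (with $x(P) \neq \infty$), the differential $dx/r$ has a simple pole (because $\mathcal{D}_{\PP^1}$ is \'etale over $\ZZ_q$ by Assumption~\ref{assump:goodlift}(2)) and the $b^0_i$ are regular there, so $\omega$ is already logarithmic at $P$. At points above infinity, the $E_{\infty}$-condition $\ord_{\infty}(u) > \ord_0(W)-\deg(r)+1$ leaves room for the pole order to reach about $-(\ord_0(W)+1) e_{\infty}$. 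Applying Proposition~\ref{prop:infinitered} iteratively brings this down to a logarithmic pole, and the precision tracking in the proof of Proposition~\ref{prop:infiniteprecision} shows that the total $p$-adic denominator introduced is at most $p^{\lfloor \log_p(-(\ord_0(W)+1) e_{\infty}) \rfloor} = p^{\delta_1}$, so that $p^{\delta_1} [\omega] \in \Lambda_{cris}$.

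For the second inclusion, I would start from a representative $\omega$ of a log-crystalline class with integral logarithmic poles along $\mathcal{D}_{\mathcal{X}}$. Since the log poles at finite points are absorbed by $dx/r$, $\omega$ already admits an expression $(\sum u^0_i b^0_i) dx/r$ with $u^0_i \in \ZZ_q[x]$, hence $\omega \in E_0 \cap \Omega^1(\mathcal{U})$ integrally. The remaining issue is at infinity: the logarithmic pole condition translates to $\ord_{\infty}(u^{\infty}) \geq 1-\deg(r)$ in the $b^{\infty}$-basis, which need not meet the strictly stronger inequality defining $E_{\infty}$. A Riemann-Roch-style estimate, using that log differentials on $\mathcal{X}$ have divisor of degree $2g-2$ distributed across the $d_x$ points above $\infty$, will bound the resulting excess in $\ord_{\infty}$ by $\lfloor (2g-2)/d_x \rfloor + 1$. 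One further iterated application of Proposition~\ref{prop:infinitered} then absorbs this excess at the cost of a $p$-adic denominator of size at most $p^{\lfloor \log_p ((\lfloor (2g-2)/d_x \rfloor + 1) e_{\infty}) \rfloor} = p^{\delta_2}$, giving $[\omega] \in p^{-\delta_2} \Lambda_{E_0 \cap E_{\infty}}$.

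The main obstacle I expect is establishing the Riemann-Roch bound $\lfloor (2g-2)/d_x \rfloor + 1$ for the excess at infinity in the second inclusion: this requires carefully analysing how the divisor of a log-crystalline representative distributes over the fibre of $x$ above $\infty$, and relating this to the degree bound on $u^0$ implicit in $E_0 \cap E_{\infty}$. Once that bound is in hand, the pole-reduction and precision bookkeeping are routine applications of Propositions~\ref{prop:infinitered} and~\ref{prop:infiniteprecision}.
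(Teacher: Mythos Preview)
Your approach to the first inclusion is close in spirit to the paper's, though the paper argues cohomologically rather than by explicit reduction: it identifies $(E_0 \cap E_{\infty}) \cap \Omega^1(\mathcal{U})$ with $H^0(\Omega^1(\log \mathcal{D}_{\mathcal{X}}) \otimes \mathcal{O}(m_1 \mathcal{D}_{\infty}))$ for $m_1 = -(\ord_0(W)+1)$, and then bounds the cokernel of $\mathbb{H}^1(\mathcal{C}(0)) \to \mathbb{H}^1(\mathcal{C}(m_1))$ by $p^{\delta_1}$ via a hypercohomology diagram. Your direct pole-reduction variant is plausible but would need you to verify that Propositions~\ref{prop:infinitered} and~\ref{prop:infiniteprecision} really carry you all the way down to logarithmic poles (as stated they are calibrated to the $E_{\infty}$ threshold, not to log poles).

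The second inclusion, however, has a genuine gap. You assume that every class in $\Lambda_{cris}$ is represented by an integral global section of $\Omega^1(\log \mathcal{D}_{\mathcal{X}})$. This is false as soon as $g>0$: the lattice $\Lambda_{cris}$ is the image of the \emph{hypercohomology} $\mathbb{H}^1(\mathcal{C}(0))$, and the map
\[
H^0\bigl(\Omega^1(\log \mathcal{D}_{\mathcal{X}})\bigr) \longrightarrow \mathbb{H}^1(\mathcal{C}(0))
\]
is not surjective, since its cokernel sits inside $H^1(\mathcal{O}_{\mathcal{X}})$, which has rank $g$. So you simply cannot ``start from a representative $\omega$ with integral logarithmic poles''; such a representative need not exist.

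This is exactly what the quantity $m_2 = \lfloor (2g-2)/d_x \rfloor + 1$ repairs, but not in the way you describe. It is not a bound on how the divisor of a log-differential distributes over $x^{-1}(\infty)$; rather, it is the least $m$ with $\deg(m\mathcal{D}_{\infty}) > 2g-2$, so that Serre duality forces $H^1(\mathcal{O}(m_2 \mathcal{D}_{\infty})) = 0$. With this twist the map
\[
H^0\bigl(\Omega^1(\log \mathcal{D}_{\mathcal{X}}) \otimes \mathcal{O}(m_2 \mathcal{D}_{\infty})\bigr) \longrightarrow \Lambda_{cris}
\]
\emph{is} surjective, and only then can one reduce the resulting (integral, but higher-pole) representative back into $E_0 \cap E_{\infty}$ at the cost of $p^{\delta_2}$. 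Your ``main obstacle'' is thus misidentified: the difficulty is not a divisor-distribution estimate but the non-surjectivity of global log-forms onto log-crystalline cohomology.
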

\begin{proof} Our proof generalises that of \cite[Proposition $5.3.1$]{edixhoven}. We define the effective divisor
\begin{align*}
\mathcal{D}_{\infty} = \sum_{P \in \mathcal{X} \setminus \mathcal{U} : x(P) = \infty} e_P P
\end{align*}
on the curve $\mathcal{X}$. For any integer $m \geq 0$, we let $\mathcal{C}^{\bullet}(m)$ denote the complex
\[
\begin{CD}
\mathcal{O}(m \mathcal{D}_{\infty}) @>>> \Omega^1(\log(\mathcal{D}_{\mathcal{X}})) \otimes \mathcal{O}(m \mathcal{D}_{\infty}),
\end{CD}
\]
i.e. the De Rham complex on $\mathcal{X}$ with logarithmic poles along $\mathcal{D}_{\mathcal{X}}$ twisted by the line bundle 
$\mathcal{O}(m \mathcal{D}_{\infty})$. Note that $\mathcal{C}^{\bullet}(l)$ is a subcomplex of $\mathcal{C}^{\bullet}(m)$ whenever 
$l \leq m$. From the comparison theorem between log-De Rham and log-crystalline cohomology, we know that 
$\mathbb{H}^1(\mathcal{C}^{\bullet}(0))=H^1_{cris}(\mathcal{X},\mathcal{D}_{\mathcal{X}})$. 

Recall that $z_P$ denotes an \'etale local coordinate at $P \in \mathcal{X} \setminus \mathcal{U}$. 
For any integer $m \geq 0$, we have the following diagram:
\[
\begin{CD}
H^0(\Omega^1(\log(\mathcal{D}_{\mathcal{X}}))) @>>> \mathbb{H}^1(\mathcal{C}(0)) @>>> H^1(\mathcal{O}) \\
@VVV @VVV @VVV \\
H^0(\Omega^1(\log(\mathcal{D}_{\mathcal{X}})) \otimes \mathcal{O}(m \mathcal{D}_{\infty})) @>>> \mathbb{H}^1(\mathcal{C}(m)) @>>> H^1(\mathcal{O}(m \mathcal{D}_{\infty})) \\
@VVV @VVV @. \\
\bigoplus\limits_{P \in \mathcal{X} \setminus \mathcal{U}:x(P)=\infty} \frac{z_P^{-me_P} \ZZ_q[[z_P]] \frac{dz_P}{z_P}}{\ZZ_q[[z_P]] \frac{dz_P}{z_P}} @>>> 
\mathop{\bigoplus \bigoplus}\limits_{\substack{P \in \mathcal{X} \setminus \mathcal{U}:x(P)=\infty \\ -me_P \leq i < 0}} (\ZZ_q/i\ZZ_q) z_P^i \frac{dz_P}{z_P} \\
@VVV @VVV \\
0 @. 0
\end{CD}
\]
where the first two rows and columns are exact and all (hyper)cohomology is taken with respect to global sections on $\mathcal{X}$. Hence the cokernel of the map 
$$\mathbb{H}^1(\mathcal{C}(0)) \rightarrow \mathbb{H}^1(\mathcal{C}(m))$$ 
is annihilated by $p^{\lfloor \log_p(m e_{\infty}) \rfloor}$. For $m_1=-(\ord_0(W)+1)$, we
have that 
$$H^0(\Omega^1(\log(\mathcal{D}_{\mathcal{X}})) \otimes \mathcal{O}(m_1 \mathcal{D}_{\infty}))=(E_0 \cap E_{\infty}) \cap \Omega^1(\mathcal{U}).$$ 
Therefore, it follows that $p^{\delta_1} \Lambda_{E_0 \cap E_{\infty}} \subset  \Lambda_{cris}$.

We now prove the other inclusion. For $m_2=\lfloor (2g-2)/d_x \rfloor+1$, it follows from Serre duality that
$H^1(\mathcal{O}(m_2 \mathcal{D}_{\infty}))=H^0(\mathcal{O}(\omega_{\mathcal{X}}-m_2 \mathcal{D}_{\infty}))=0$, since
we have that $\deg(m_2 \mathcal{D}_{\infty})) > 2g-2 = \deg(\omega_{\mathcal{X}})$. So the map
$$H^0(\Omega^1(\log(\mathcal{D}_{\mathcal{X}})) \otimes \mathcal{O}(m_2 \mathcal{D}_{\infty})) \rightarrow \Lambda_{cris}$$
is surjective. However, by Proposition~\ref{prop:infinitered}, the class in $\Hrig^1(U)$ of an element of 
$H^0(\Omega^1(\log(\mathcal{D}_{\mathcal{X}})) \otimes \mathcal{O}(m_2 \mathcal{D}_{\infty}))$
can be represented by an element of $p^{-\delta_2} \Lambda_{E_0 \cap E_{\infty}}$. This finishes the proof. 
\end{proof}

\begin{cor}
We have that $\ord_p(\mathcal{F}) \geq -\delta$. 
\end{cor}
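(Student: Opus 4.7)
The plan is to sandwich the Frobenius action between the two inclusions given by Proposition~\ref{prop:lattice}, using that the crystalline Frobenius preserves the integral crystalline lattice $\Lambda_{cris}$.

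Two ingredients are needed before the chase begins. First, by property~(2) imposed on the basis in Step~I, the vectors $[\omega_1, \ldots, \omega_{\kappa}]$ form a $\ZZ_q$-basis of $\Lambda_{E_0 \cap E_{\infty}}$; in particular each of the first $2g$ basis vectors lies in $\Lambda_{E_0 \cap E_{\infty}}$, and, conversely, every element of $\Lambda_{E_0 \cap E_{\infty}}$ has $p$-adically integral coordinates with respect to this basis. Second, via the comparison $H^1_{cris}(\mathcal{X},\mathcal{D}_{\mathcal{X}}) \otimes \QQ_q \cong \Hrig^1(U)$ already invoked in the proof of Proposition~\ref{prop:lattice}, the Frobenius lift from Theorem~\ref{thm:froblift} agrees with the functorial crystalline Frobenius, which by standard theory preserves $\Lambda_{cris}$.

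With these two facts in hand, the argument is a direct diagram chase. For any $\omega \in \Lambda_{E_0 \cap E_{\infty}}$, the left inclusion of Proposition~\ref{prop:lattice} gives $p^{\delta_1}\omega \in \Lambda_{cris}$, so $p^{\delta_1}\Frob_p(\omega) \in \Lambda_{cris}$, and the right inclusion then yields $p^{\delta_1}\Frob_p(\omega) \in p^{-\delta_2}\Lambda_{E_0 \cap E_{\infty}}$, i.e.\ $\Frob_p(\omega) \in p^{-\delta}\Lambda_{E_0 \cap E_{\infty}}$. Specialising to $\omega = \omega_i$ for $1 \leq i \leq 2g$ and reading off coordinates in the basis $[\omega_1, \ldots, \omega_{\kappa}]$, the first $2g$ coordinates lie in $p^{-\delta}\ZZ_q$, while the remaining ones vanish because $\Frob_p$ preserves the subspace $\Hrig^1(X)$ on which $\mathcal{F}$ is defined. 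This gives $\ord_p(\mathcal{F}_{j,i}) \geq -\delta$ for every $1 \leq i,j \leq 2g$, as required.

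The only step requiring genuine thought is the identification of the Frobenius of Theorem~\ref{thm:froblift} with the crystalline Frobenius preserving $\Lambda_{cris}$; but this compatibility is part of the same comparison machinery used to prove Proposition~\ref{prop:lattice}, so nothing new is needed beyond what has already been invoked.
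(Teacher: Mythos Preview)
Your proposal is correct and follows essentially the same approach as the paper: the paper's proof simply notes that $\Lambda_{cris}$ is preserved by $\Frob_p$ and that $[\omega_1,\ldots,\omega_{\kappa}]$ is a basis for $\Lambda_{E_0 \cap E_{\infty}}$, then invokes Proposition~\ref{prop:lattice}. You have spelled out the implied lattice chase explicitly and added the observation that $\Frob_p$ preserves $\Hrig^1(X)$, but the underlying argument is identical.
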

\begin{proof}
Note that $\Lambda_{cris}$ is mapped into itself by $\Frob_p$ and that the
basis $[\omega_1,\ldots,\omega_{\kappa}]$ for $\Hrig^1(U)$ 
is by construction a basis for $\Lambda_{E_0 \cap E_{\infty}}$. Therefore, the result 
follows from Proposition~\ref{prop:lattice}.
\end{proof}

\begin{prop} \label{prop:precF}
In order to recover $\chi(T) \in \ZZ[T]$ exactly, it is sufficient to know the matrix 
$\mathcal{F}$ to $p$-adic precision 
\begin{align*}
\max_{1 \leq i \leq g} \left\{ \left\lfloor \log_p\left(\frac{4g}{i}\right) + \left(\frac{ni}{2}\right) \right\rfloor + 1 \right\}+\delta  \; \; \; \in \BigOh(d_x d_y n).
\end{align*}
\end{prop}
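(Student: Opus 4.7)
The plan is to reduce to Proposition~\ref{prop:newgir} by tracking precision loss from $\mathcal{F}$ to the characteristic polynomial $\chi(T) = \det(1 - T \mathcal{F}^{[n]})$, where $\mathcal{F}^{[n]} = \mathcal{F} \mathcal{F}^{\sigma} \cdots \mathcal{F}^{\sigma^{n-1}}$ is the matrix of $\Frob_q = \Frob_p^n$ acting on $\Hrig^1(X)$. Let $N_0$ denote the precision bound from Proposition~\ref{prop:newgir}; it suffices to show that knowing $\mathcal{F}$ modulo $p^{N_0 + \delta}$ determines $\chi(T)$ modulo $p^{N_0}$. The key observation is that the characteristic polynomial is invariant under change of basis, so I can work in a basis in which Frobenius is integral and the naive precision loss disappears.

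Concretely, I would fix a $\ZZ_q$-basis $B$ of the image of $H^1_{cris}(\mathcal{X}, \mathcal{D}_{\mathcal{X}})$ in $\Hrig^1(X)$ and let $\mathcal{F}_B$ be the matrix of $\Frob_p$ in that basis. Since this lattice is stable under $\Frob_p$, we have $\mathcal{F}_B \in M_{2g \times 2g}(\ZZ_q)$. Proposition~\ref{prop:lattice} (applied after restricting all constructions to the subspace $\Hrig^1(X)$) supplies a change of basis matrix $P$ between $[\omega_1, \ldots, \omega_{2g}]$ and $B$ with $P \in p^{-\delta_1} M(\ZZ_q)$ and $P^{-1} \in p^{-\delta_2} M(\ZZ_q)$. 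From the $\sigma$-semilinear conjugation formula $\mathcal{F}_B = (P^{\sigma})^{-1} \mathcal{F} P$, the total valuation loss in the conjugation is at most $\delta_1 + \delta_2 = \delta$, so an error of size $p^{N_0 + \delta}$ in $\mathcal{F}$ propagates to an error of size at most $p^{N_0}$ in $\mathcal{F}_B$.

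Once $\mathcal{F}_B$ is known modulo $p^{N_0}$, the semilinear product $\mathcal{F}_B^{[n]} = \mathcal{F}_B \mathcal{F}_B^{\sigma} \cdots \mathcal{F}_B^{\sigma^{n-1}}$ can be formed without any further loss of precision, since multiplication of integral matrices known modulo $p^{N_0}$ produces an integral matrix known modulo $p^{N_0}$. Therefore $\chi(T) = \det(1 - T \mathcal{F}_B^{[n]})$ is determined modulo $p^{N_0}$, and Proposition~\ref{prop:newgir} pins it down exactly as an element of $\ZZ[T]$. The asymptotic estimate $\BigOh(d_x d_y n)$ follows because $\delta \in \BigOh(\log_p(d_x^2 d_y))$ by Assumption~\ref{assump:ordW} and Proposition~\ref{prop:degr}, which is dominated by the $\BigOh(d_x d_y n)$ bound for $N_0$. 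The main conceptual content has already been absorbed into Proposition~\ref{prop:lattice}; the remaining argument is essentially precision bookkeeping powered by the invariance of the characteristic polynomial under conjugation.
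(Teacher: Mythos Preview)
Your argument is correct and follows essentially the same route as the paper: pass to a basis of the $\Frob_p$-stable lattice $\Lambda_{cris}$, where the $n$-fold $\sigma$-twisted product incurs no precision loss, and use Proposition~\ref{prop:lattice} to bound the cost of the change of basis by $\delta$. Your write-up is in fact more explicit than the paper's, which simply observes that ``with respect to a basis for $\Lambda_{cris}$ there would be no loss of precision'' and invokes Proposition~\ref{prop:lattice}; the only cosmetic discrepancy is that the paper orders the product as $\mathcal{F}^{\sigma^{n-1}}\cdots\mathcal{F}^{\sigma}\mathcal{F}$ rather than your $\mathcal{F}\mathcal{F}^{\sigma}\cdots\mathcal{F}^{\sigma^{n-1}}$, which does not affect the reasoning.
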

\begin{proof}
We have to compute 
$$\mathcal{F}^{(n)}=\mathcal{F}^{\sigma^{(n-1)}} \mathcal{F}^{\sigma^{(n-2)}} \cdots \mathcal{F}$$ 
and its reverse characteristic polynomial $\chi$. The
basis $[\omega_1,\ldots,\omega_{\kappa}]$ for $\Hrig^1(U)$ that we constructed is a basis for
$\Lambda_{E_0 \cap E_{\infty}}$. Note that with respect to a basis for $\Lambda_{cris}$ there
would be no loss of precision in the computation. Therefore, the result follows from Proposition~\ref{prop:lattice}
by changing basis from $[\omega_1,\ldots,\omega_{\kappa}]$ to a basis for $\Lambda_{cris}$, computing $\chi(T)$ with
respect to this basis, and changing basis back to $[\omega_1,\ldots,\omega_{\kappa}]$.
\end{proof}

\begin{defn} We define $f_1: \NN \rightarrow \ZZ_{\geq 0}$ and $f_2 \in \ZZ_{\geq 0}$ by
\begin{align*}
f_1(m)      &= \big\lfloor \log_p\bigl(p(m-1)e_0\bigr) \big\rfloor + \big \lfloor \log_p \bigl( -(\ord_{\infty}(W^{-1})+1)e_{\infty} \bigr) \big \rfloor,  \\
f_2         &= \big\lfloor \log_p\bigl(-p(\ord_0(W)+1) e_{\infty}) \bigr) \big\rfloor.
\end{align*}
\end{defn}

\begin{prop} \label{prop:prec} In order to recover $\chi(T) \in \ZZ[T]$ exactly, it is sufficient to choose the $p$-adic precision $N$ such that for all $m \geq N$
\[ m-\max \{ f_1(m), f_2 \} \geq \max_{1 \leq i \leq g} \left\{ \left\lfloor \log_p\left(\frac{4g}{i}\right) + \left(\frac{ni}{2}\right) \right\rfloor + 1 \right\}+\delta. \]
Therefore, we may take $N \in \SoftOh(d_x d_y n)$.
\end{prop}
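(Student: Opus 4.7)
The plan is to combine Proposition~\ref{prop:precF} with careful bookkeeping of the $p$-adic precision loss through the reduction procedure of Step~III. By Proposition~\ref{prop:precF}, it suffices to recover $\mathcal{F}$ to $p$-adic precision
\[
N_F := \max_{1 \leq i \leq g}\left\{\left\lfloor \log_p(4g/i) + (ni/2) \right\rfloor + 1\right\} + \delta,
\]
so the goal reduces to showing that when the algorithm is run at working precision $m$, the reduced $1$-forms (and hence the matrix entries of $\mathcal{F}$) are obtained to precision at least $m - \max\{f_1(m), f_2\}$. The displayed inequality is then simply the assertion that this exit precision meets $N_F$, imposed for all $m \geq N$ because the inequality must persist at the various intermediate precisions encountered during the recursive Frobenius lift.

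To establish the loss bound $\max\{f_1(m), f_2\}$, I would begin from the explicit shape of $\Frob_p(\omega_i)$ furnished by Proposition~\ref{prop:convbound}(3): an expansion in the basis $[b^0_k/r]$ with $r$-adic pole order at most $pm-1$ and with polynomial degrees controlled through $-\ord_{\infty}(W) - p\ord_{\infty}(W^{-1})$. Step~III then proceeds by iterating Propositions~\ref{prop:finitered} and~\ref{prop:infinitered}, and the corresponding precision losses are bounded by Propositions~\ref{prop:finiteprecision} and~\ref{prop:infiniteprecision}. The two summands of $f_1(m)$ should emerge as follows: the finite reductions run with pole parameter $\ell \leq p(m-1)$, yielding the contribution $\lfloor \log_p(p(m-1) e_0) \rfloor$; then, after conversion to the $b^{\infty}$ basis (which brings in $W^{-1}$), the subsequent infinite reductions run with parameter bounded by $-(\ord_{\infty}(W^{-1})+1)$, contributing the second summand. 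The expression $f_2$ should arise as the alternative bound along the pathway where the infinite reductions are applied to $\Frob_p(\omega_i)$ viewed in the $b^{\infty}$ basis directly, with the factor of $p$ in $-p(\ord_0(W)+1)$ tracing back to the $p$-th power substitution built into the Frobenius lift combined with the change of basis through $W$. Taking the maximum guards against whichever pathway dominates in a given example.

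For the asymptotic claim $N \in \SoftOh(d_x d_y n)$, I would plug in Propositions~\ref{prop:newgir} and~\ref{prop:degr}: the former gives $N_F \in \BigOh(d_x d_y n)$ once one notes $\delta \in \BigOh(\log_p(d_x d_y))$ via Proposition~\ref{prop:degr}, while $f_1(m)$ and $f_2$ are both $\BigOh(\log_p(m \cdot d_x d_y))$. Hence choosing $N$ equal to $N_F$ plus a logarithmic correction solves the implicit inequality, and the result lies in $\SoftOh(d_x d_y n)$. The main obstacle I expect is verifying that the reduction-procedure bookkeeping produces exactly the summands of $f_1$ and the single expression $f_2$, rather than the looser bounds one would obtain from naively concatenating Propositions~\ref{prop:finiteprecision} and~\ref{prop:infiniteprecision}; this hinges on using the tight pole/degree estimates of Proposition~\ref{prop:convbound}(3) together with the precise appearance of $e_0$, $e_{\infty}$ and the $\ord$ invariants of $W, W^{-1}$ at each reduction stage.
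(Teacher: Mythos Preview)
Your overall strategy is correct and matches the paper's: reduce to Proposition~\ref{prop:precF}, then bound the precision loss incurred in Step~III by combining the pole bounds of Proposition~\ref{prop:convbound} with the loss estimates of Propositions~\ref{prop:finiteprecision} and~\ref{prop:infiniteprecision}. The asymptotic conclusion is also argued correctly.

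There is one genuine misconception. Your explanation for why the inequality is required for all $m \geq N$ --- ``because the inequality must persist at the various intermediate precisions encountered during the recursive Frobenius lift'' --- is not the reason. The Hensel recursion of Theorem~\ref{thm:froblift} plays no role here. The point is rather this: when you truncate $\Frob_p(\omega_i)$ at precision $N$, the discarded part consists of terms whose $p$-adic valuation is some $m \geq N$. For such a term, Proposition~\ref{prop:convbound} (together with the extra factor $p$ in~\eqref{eq:Fp1}) bounds its pole order in $r$ in terms of $m$, and hence the reduction loss in Proposition~\ref{prop:finiteprecision} depends on $m$ through $f_1(m)$. Requiring $m - \max\{f_1(m),f_2\} \geq N_F$ for every $m \geq N$ guarantees that \emph{every} discarded term reduces to something of valuation at least $N_F$.

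A second, less serious, point: your description of $f_1$ versus $f_2$ as two alternative ``pathways'' obscures the actual split. In the paper the decomposition is by the sign of the $r$-exponent $j$ in the expansion
\[
\Frob_p(\omega_i) = \sum_{j \in \ZZ}\Bigl(\sum_k \frac{w_{i,j,k}(x)}{r^j} b^0_k\Bigr)\frac{dx}{r}.
\]
Terms with $j>0$ undergo finite reduction (first summand of $f_1$), which leaves a small pole at the points over $\infty$; after passing to the $b^{\infty}$ basis via $W^{-1}$ one bounds this pole and the resulting infinite-reduction loss is the second summand of $f_1$. Terms with $j \leq 0$ need no finite reduction at all; for these one bounds $\ord_{\infty}$ of $\Frob_p(\omega_i)$ directly in the $b^{\infty}$ basis, using that the Frobenius structure on $\mathcal{R}^{\infty}$ has no pole at $\infty$ and that $\Frob_p(dx/x)=p\,dx/x$, which yields $f_2$. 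The $\max$ then just reflects that both kinds of terms are present.
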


\begin{proof}
We write
\begin{align} \label{eq:padicprecision}
\Frob_p(\omega_i) = \sum_{j \in \ZZ} \left( \sum_{k=0}^{d_x-1} \frac{w_{i,j,k}(x)}{r^j} b^0_k \right) \frac{dx}{r} 
\end{align}
where $w_{i,j,k}(x) \in \ZZ_q[x]$ satisfies $\deg(w_{i,j,k}(x)) < \deg(r)$ for all $i,j,k$. 

First, consider the terms with $j>0$. If $\ord_p(w_{i,j,k})=m$, then we know from 
Proposition~\ref{prop:convbound}, and the factor $p$ appearing in~$\eqref{eq:Fp1}$,
that $j \leq pm$. Therefore, it follows from Proposition~\ref{prop:finiteprecision} that the loss 
of precision during the finite reductions of terms in~\eqref{eq:padicprecision}
with $j>0$ and $p$-adic valuation~$m$ is at most $\big\lfloor \log_p\bigl(p m e_0\bigr) \big\rfloor$. However, the finite reductions
can introduce a (small) pole at the points lying over $\infty$, which still 
has to be reduced as well. The matrix of the change of basis from $[b_0^{0},\ldots,b_{d_x-1}^{0}]$ to $[b_0^{\infty},\ldots,b_{d_x-1}^{\infty}]$ 
is $W^{-1}$ and $\ord_{\infty}(v_i/r^{\ell}) \geq 1$ for all $0 \leq i \leq d_x-1$ and $\ell>0$ in Proposition~\ref{prop:finitered}. 
Therefore, it follows from Proposition~\ref{prop:infiniteprecision} that the loss of precision during these final infinite reductions 
is at most $\big \lfloor \log_p \bigl( -(\ord_{\infty}(W^{-1})+1)e_{\infty} \bigr) \big \rfloor$. We conclude that the 
total loss of precision during the reductions of the terms in~\eqref{eq:padicprecision} with $j>0$ and $p$-adic valuation~$m$ is at most $f_1(m)$.

Second, consider the terms with $j \leq 0$. By the definition of $E_{\infty}$, the coefficients of $\omega_i$ 
with respect to the basis $[b^{\infty}_0,\ldots,b^{\infty}_{d_x-1}] dx/r$ have order
at $\infty$ bounded below by $\ord_0(W)-\deg(r)+2$ for all $0 \leq i \leq d_x-1$. Therefore, 
with respect to the basis $[b^{\infty}_0,\ldots,b^{\infty}_{d_x-1}] dx/x$ the coefficients of $\omega_i$ have order at $\infty$ 
bounded below by $\ord_0(W)+1$. By (the proof of) Proposition~\ref{prop:convbound}, the Frobenius structure on 
$\RR^0 x_{*}(\mathcal{O}_{\mathbb{U}})$ does not have a pole at $\infty$ with respect to the basis 
$[b^{\infty}_0,\ldots,b^{\infty}_{d_x-1}]$. Moreover, note that $\Frob_p$ sends the $1$-form $dx/x$ to $p dx/x$.
Hence the coefficients of $\Frob_p(\omega_i)$ with respect to the basis 
$[b^{\infty}_0,\ldots,b^{\infty}_{d_x-1}]dx/x$ have order at $\infty$ bounded below by $p(\ord_0(W)+1)$.
So the coefficients of $\Frob_p(\omega_i)$ with respect to the basis 
$[b^{\infty}_0,\ldots,b^{\infty}_{d_x-1}]dx/r$ have order at $\infty$ bounded below by $p(\ord_0(W)+1)-\deg(r)+1$.
Therefore, it follows from Proposition~\ref{prop:infiniteprecision} that the loss of precision during the reductions
of the terms in \eqref{eq:padicprecision} with $j<0$ is at most $f_2$.

We conclude that terms in~\eqref{eq:padicprecision} that have $p$-adic valuation $m$ before reduction will have $p$-adic valuation at least
$m-\max \{ f_1(m), f_2 \}$ after reduction. The bound for the $p$-adic precision~$N$ now follows from Proposition~\ref{prop:precF} and is
clearly contained in $\SoftOh(d_x d_y n)$.
\end{proof}

\begin{thm} \label{thm:time}
The time complexity of the algorithm presented in this section is $\SoftOh(p d_x^6 d_y^4 n^3)$. 
\end{thm}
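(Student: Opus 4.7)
The plan is pure bookkeeping: combine the per-step time bounds already established in Section~\ref{sec:complete}, substitute the global precision bound $N \in \SoftOh(d_x d_y n)$ coming from Proposition~\ref{prop:prec}, and verify that each of the four contributions is absorbed into $\SoftOh(p d_x^6 d_y^4 n^3)$. There is no new mathematical input; the whole argument consists of taking the maximum of four explicit expressions.

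First I would dispose of Steps I and IV, which are polylogarithmic in $p$: their bounds $\SoftOh(\log(p) d_x^{2\theta} d_y^{\theta} n N)$ and $\SoftOh(\log^2(p)(d_x d_y)^{\theta} n N)$ become $\SoftOh(\log(p) d_x^{2\theta+1} d_y^{\theta+1} n^2)$ and $\SoftOh(\log^2(p)(d_x d_y)^{\theta+1} n^2)$ after substituting $N$, both clearly dominated by the target since they carry only a $\log^{O(1)}(p)$ factor. Next, for Step II the bound is $\SoftOh(p d_x^{\theta+1} d_y^2 (N+d_x) n N)$; since $N = \SoftOh(d_x d_y n)$ and $d_x \leq d_x d_y n$, the factor $(N+d_x)$ is $\SoftOh(d_x d_y n)$, so Step II is $\SoftOh(p d_x^{\theta+3} d_y^4 n^3)$, which lies in $\SoftOh(p d_x^6 d_y^4 n^3)$ using $\theta \leq 3$ (in fact $\theta \leq 2.3729$).

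The dominant contribution, and the term actually forcing the $d_x^6$ exponent, is Step III, whose time complexity was shown to be
\[
\SoftOh(p d_x^4 d_y^2 n N^2 + d_x^5 d_y^3 n N).
\]
Substituting $N = \SoftOh(d_x d_y n)$ turns the first summand into $\SoftOh(p d_x^6 d_y^4 n^3)$ and the second into $\SoftOh(d_x^6 d_y^4 n^2)$, which is absorbed by the first. Taking the maximum over all four steps yields the claimed bound $\SoftOh(p d_x^6 d_y^4 n^3)$.

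The only (minor) point that requires care, rather than brute substitution, is confirming that $N \in \SoftOh(d_x d_y n)$ really is sufficient to recover $\chi(T)$ exactly — i.e.\ that the precision loss from the reduction procedure in Step III is already accommodated. This is precisely the content of Proposition~\ref{prop:prec}, combined with the bound on $g$ and the pole orders of $W^0$, $W^\infty$ furnished by Proposition~\ref{prop:degr} and Assumption~\ref{assump:ordW}, so no additional work is needed. The theorem then follows.
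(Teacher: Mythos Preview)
Your approach is exactly the paper's: sum the per-step complexities, substitute $N \in \SoftOh(d_x d_y n)$ from Proposition~\ref{prop:prec}, and discard dominated terms. The bookkeeping for Steps~I, III, and~IV is fine.

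One small inaccuracy: for Step~II you quote the bound $\SoftOh(p d_x^{\theta+1} d_y^2 (N+d_x) n N)$, but that is only the cost of computing the matrix~$\Psi$. The paper's stated total for Step~II is the larger $\SoftOh(p d_x^{4} d_y^{2} (N+d_x) n N)$, coming from the computation of all the $\Frob_p(\omega_i)$. Substituting $N$ and $(N+d_x)$ by $\SoftOh(d_x d_y n)$ in this correct bound gives $\SoftOh(p d_x^{6} d_y^{4} n^{3})$, so Step~II already attains the target and is not strictly dominated by Step~III as you assert. This does not affect the final conclusion, but your claim that Step~III is \emph{the} term forcing the $d_x^{6}$ exponent is slightly off. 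Also, your justification that Steps~I and~IV are dominated ``since they carry only a $\log^{O(1)}(p)$ factor'' is incomplete on its own: one still has to check that $2\theta+1 \leq 6$ and $\theta+1 \leq 4$, which holds because $\theta \leq 2.3729$.
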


\begin{proof}
We take the sum of the complexities of the different steps using Proposition~\ref{prop:prec}, 
leaving out terms and factors that are absorbed by the $\SoftOh$. 
\end{proof}

For the analysis of the space complexity, we will not go into the same detail as for the time
complexity. However, one can prove the following theorem. 

\begin{thm} \label{thm:space}
The space complexity of the algorithm presented in this section is 
$\SoftOh(p d_x^4 d_y^3 n^3)$. 
\end{thm}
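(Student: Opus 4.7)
The plan is to bound the space used at each of the four steps of the algorithm separately and identify the dominant contribution; by Proposition~\ref{prop:prec} we may freely use $N \in \SoftOh(d_x d_y n)$ throughout.

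First I would handle the cheap steps. In Step~I the basis $[\omega_1,\ldots,\omega_\kappa]$ has $O(d_x d_y)$ elements, each representable with $O(d_x^2 d_y)$ many $\ZZ_q$-coefficients at precision~$N$, for a total of $\SoftOh(d_x^4 d_y^3 n^2)$. In Step~IV the matrix $\mathcal{F} \in M_{2g}(\QQ_q)$ and its $\sigma$-iterates have size $\SoftOh(g^2 \cdot nN) \subset \SoftOh(d_x^3 d_y^3 n^2)$, and taking a characteristic polynomial adds no asymptotic overhead. Both are easily within the claimed bound.

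The dominant contribution comes from Step~II. By Proposition~\ref{prop:convbound}(2) and Assumption~\ref{assump:ordW}, the Frobenius lift $\beta_\nu$ of~$y$ consists of $d_x$ coefficients in~$y$, each an $r$-adic expansion with $O(pN)$ terms whose numerators have degree $O(d_x d_y)$, together with a polynomial tail of degree $O(p d_x d_y)$, and whose $\ZZ_q$-coefficients have precision~$N$. Substituting $N \in \SoftOh(d_x d_y n)$ gives $|\beta_\nu| \in \SoftOh(p d_x^4 d_y^3 n^3)$, while $\alpha_\nu$ is a factor of~$d_x$ smaller. The crucial point---and the main obstacle to overcome---is that the matrix $\Psi$ of Proposition~\ref{prop:convbound}(3) is \emph{not} stored in full: doing so would introduce an extra factor of~$d_x$ and blow up the estimate to $\SoftOh(p d_x^5 d_y^3 n^3)$. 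Instead, I would process the $\omega_i$ one at a time, computing $\Frob_p(\omega_i)$ from $\beta_\nu$ and $\alpha_\nu$ via formula~\eqref{eq:Fp1}, materialising only the columns of~$\Psi$ currently needed and immediately passing the result to Step~III. Each intermediate $\Frob_p(\omega_i)$ inherits the same size bound as~$\beta_\nu$.

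Finally, in Step~III the reduction procedure is realised as iterated multiplication by $d_x \times d_x$ matrices over $\QQ_q[x]/(r)$ or $\QQ_q$, so the working form never exceeds the size of $\Frob_p(\omega_i)$; the resulting cohomology class is expressed in the basis from Step~I and is thus small. Taking the maximum over all steps yields the bound $\SoftOh(p d_x^4 d_y^3 n^3)$, as claimed.
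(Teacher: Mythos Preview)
Your proposal is correct and follows the same approach as the paper: both identify the bottleneck as storing a single $\Frob_p(\omega_i)$, i.e.\ one element of $\mathcal{R}$, of size $\SoftOh(p d_x^2 d_y (N+d_x) n N)$, and then substitute $N \in \SoftOh(d_x d_y n)$ from Proposition~\ref{prop:prec}. Your write-up is considerably more explicit than the paper's one-line proof; in particular you spell out the step-by-step breakdown and the pipelining of Steps~II and~III (processing one $\omega_i$ at a time so that the full matrix $\Psi$ is never materialised), which the paper leaves entirely implicit when it asserts that the space ``turns out to be that of storing a single $\Frob_p(\omega_i)$''.
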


\begin{proof}
The space complexity of the algorithm turns out to be that of storing
a single $\Frob_p(\omega_i)$, or equivalently an element of $\mathcal{R}$, 
which is $\SoftOh(p d_x^2 d_y  \bigl(N+d_x \bigr) n N )$.
The result now follows using Proposition~\ref{prop:prec}.
\end{proof}

\begin{rem} \label{rem:exclude} 
We should mention that we have excluded the computation of the matrices of the maps $res_0$ and 
$res_{\infty}$, or rather the computation of the eigenspaces of the residue matrices $G^{x_0}_{-1}$
and $G^{\infty}_{-1}$ from our complexity estimates. Analysing the available algorithms would take 
us too far, as they involve factorising polynomials etc. In practice, the time spent on computing
these eigenspaces is always neglible.
\end{rem}

\section{Implementation}

We have updated our Magma~\cite{magma} implementation from~\cite{tuitman}. The code can be found in the packages
\verb{pcc_p{ and \verb{pcc_q{ at our webpage\footnotemark \footnotetext{\url{http://perswww.kuleuven.be/jan_tuitman}}. 
We now provide an example that the algorithm from~\cite{tuitman} was not able to handle, mainly to show how to use
the code. Many more interesting examples as well as timings can be found in the example files that come with the packages and 
in~\cite{castrycktuitman}. We used Magma v2.20-3 and \verb{pcc_p-2.14{ for the computation below.

\subsubsection*{Example} The modular curve $X_1(23)$. \medskip 

Sutherland~\cite{sutherland} gives an equation $\mathcal{Q}$ for a singular plane model of the modular curve $X_1(23)$. This equation can be loaded into our
code in the following way: 
\medskip
\begin{Verbatim}[fontsize=\tiny]
load "pcc_p.m";
Q:=y^7+(x^5-x^4+x^3+4*x^2+3)*y^6+(x^7+3*x^5+x^4+5*x^3+7*x^2-4*x+3)*y^5+(2*x^7+3*x^5-x^4-2*x^3-x^2-8*x+1)*y^4+ 
(x^7-4*x^6-5*x^5-6*x^4-6*x^3-2*x^2-3*x)*y^3-(3*x^6-5*x^4-3*x^3-3*x^2-2*x)*y^2+(3*x^5+4*x^4+x)*y-x^2*(x+1)^2;
\end{Verbatim}
\medskip
Note that $d_x=7$, which is known to be optimal~\cite{derickxvanhoeij}. It turns out that $\mathcal{Q}$ satisfies Assumption~\ref{assump:goodlift}
for all prime numbers 
\[p \notin \{ 2,3,23,41,73,83,2039 \}.\] 
To compute the numerator of the zeta function of $X_1(23)$ modulo $11$, we enter the following commands: 
\medskip
\begin{Verbatim}[fontsize=\tiny]
p:=11;
chi:=num_zeta(Q,p:verbose:=true);
\end{Verbatim}
\medskip
The syntax has changed a bit compared to~\cite{tuitman}, since the $p$-adic precision~$N$ has become
an optional parameter. By default the code now handles the $p$-adic precision itself. We find that the numerator $\chi$
of the zeta function is equal to
\medskip
\begin{Verbatim}[fontsize=\tiny]                                                                                                                                                                                             
3138428376721*x^24 - 285311670611*x^23 - 285311670611*x^22 - 51874849202*x^21 - 14147686146*x^20 - 857435524*x^19 + 
8009227281*x^18 - 226759808*x^17 - 248018540*x^16 - 23205985*x^15 - 22356807*x^14 - 4861824*x^13 + 6990592*x^12 - 
441984*x^11 - 184767*x^10 - 17435*x^9 - 16940*x^8 - 1408*x^7 + 4521*x^6 - 44*x^5 - 66*x^4 - 22*x^3 - 11*x^2 - x + 1
\end{Verbatim}

\begin{rem}
There is a more efficient way to compute the zeta function of modular curves modulo a prime number~$p$, 
using modular symbols~\cite[\S 4.2]{bruin}. Again, this example mainly serves to show how to use the code.
\end{rem}

\bibliographystyle{amsplain}
\bibliography{pcc}

\end{document}